\DeclarePairedDelimiterX\setc[2]{\{}{\}}{\,#1 \;\delimsize\vert\; #2\,}
\def\bigmid{\ \rule[-3.5mm]{0.1mm}{9mm}\ }
\newtheorem{theorem}{Theorem}[section]
\newtheorem{introtheorem}{Theorem}
\newtheorem{corollary}[theorem]{Corollary}
\newtheorem{proposition}[theorem]{Proposition}
\theoremstyle{definition}
\newtheorem{remark}[theorem]{Remark}
\newtheorem{definition}[theorem]{Definition}
\newcommand{\RR}{{\mathbb R}}
\newcommand{\ZZ}{{\mathbb Z}}
\newcommand{\QQ}{{\mathbb Q}}
\newcommand{\ton}{{\otimes_{\Lambda_{\geq 0}}}}
\newcommand{\lr}{{\,\,\longrightarrow\,\,}}
\title[Barcode entropy and relative symplectic cohomology]{Barcode entropy and relative symplectic cohomology}
\author[Jonghyeon Ahn]{Jonghyeon Ahn}
\newcommand{\Addresses}{{
\bigskip
\bigskip
\footnotesize
\textsc{Institute for Basic Science, Center for Geometry and Physics, Pohang, 37673, South Korea}\par\nopagebreak
\textit{E-mail address}: \texttt{jahn@ibs.re.kr}}}
\date{}
\begin{document}

\maketitle
\begin{abstract}
    In this paper, we study the barcode entropy—the exponential growth rate of the number of not-too-short bars—of the persistence module associated with the relative symplectic cohomology $SH_M(K)$ of a Liouville domain $K$ embedded in a symplectic manifold $M$. Our main result establishes a quantitative link between this Floer-theoretic invariant and the dynamics of the Reeb flow on $\partial K$. More precisely, we show that the barcode entropy of the relative symplectic cohomology $SH_M(K)$ is bounded above by a constant multiple of the topological entropy of the Reeb flow on the boundary of the domain, where the constant depends on the embedding of $K$ into $M$.
\end{abstract}

\vspace{1cm}

\tableofcontents
\section{Introduction}
\subsection{Motivation}
A \textit{persistence module}, which was originally introduced in the framework of topological data analysis to encode the evolution of (co)homological features across filtrations, has found significant applications beyond its initial scope. In particular, this concept was adapted to the realm of symplectic geometry by Polterovich and Shelukhin \cite{ps}, providing a powerful algebraic formalism for studying quantitative and dynamical aspects of Hamiltonian systems. Following their work, persistence modules have become a central object in the investigation of Floer-theoretic invariants and their connections to Hamiltonian dynamics. 

\vspace{0.2cm}

One of the fundamental results in the theory of persistence modules is the \textit{normal form theorem}, which asserts that every persistence module $(V,\pi)$ is completely determined by a multiset $\mathcal{B}(V,\pi)=\{(I_i,m_i)\}$ of intervals $I_i$ with multiplicity $m_i$. This multiset $\mathcal{B}(V,\pi)$ is called the \textit{barcode} of $(V,\pi)$ and the elements of $\mathcal{B}(V,\pi)$ are called \textit{bars}. The \textit{barcode entropy} of a persistence module $(V,\pi)$, introduced by Cineli, Ginzburg, and Gurel \cite{cgg}, measures the exponential growth rate of the “not-too-short” bars in the barcode $\mathcal{B}(V,\pi)$. While the barcode entropy has been studied in several Floer-theoretic settings, its behavior in relative symplectic cohomology setting has remained unexplored. In this paper, we initiate such a study by introducing and analyzing the barcode entropy associated with the relative symplectic cohomology introduced by Varolgunes in \cite{v}.

\vspace{0.2cm}

The initial investigation of barcode entropy by Cineli, Ginzburg, and Gurel \cite{cgg} focused on the persistence module structure arising from the Lagrangian Floer cohomology of pairs of Hamiltonian isotopic Lagrangian submanifolds. For two Lagrangian submanifolds $L$ and $L'$ with $L' = \varphi_H(L)$ where $\varphi_H$ is the Hamiltonian diffeomorphism corresponding to $H$, we denote the barcode entropy of $HF(L,L')$ by $\hbar(\varphi_H;L)$. Their central result is that this barcode entropy is controlled by the dynamics of $\varphi_H$: They proved that the barcode entropy $\hbar(\varphi_H;L)$ is bounded above by the topological entropy $h_{\textrm{top}}(\varphi_H)$ of the Hamiltonian diffeomorphism $\varphi_H$.

\vspace{0.2cm}

This construction also yields a notion of barcode entropy $\hbar(\varphi_H)$ for the diffeomorphism $\varphi_H$ itself by considering the diagonal $\Delta$ and the graph $\Gamma_{\varphi_H}$ of $\varphi_H$, namely,
\begin{align*}
    \hbar(\varphi_H) = \hbar(\textrm{id}\times \varphi_H;\Delta)
\end{align*}
In this case, we still have the same upper bound:
\begin{align*}
    \hbar(\varphi_H) \leq h_{\textrm{top}}(\varphi_H).
\end{align*}
Notably, these barcode entropies are finite because the topological entropy of a smooth map is known to be finite. Moreover, they established a nontrivial lower bound for $\hbar(\varphi_H)$, showing that
\begin{align*}
    \hbar(\varphi_H) \geq h_{\textrm{top}}(\varphi_H|_X)
\end{align*}
whenever $X$ is a compact \textit{hyperbolic} invariant set of $\varphi_H$. This lower bound, in particular, shows that $\hbar(\varphi_H)$ is not always zero. As a consequence, when the ambient symplectic manifold is two-dimensional, the following equality holds:
\begin{align*}
    \hbar(\varphi_H) =  h_{\textrm{top}}(\varphi_H).
\end{align*}

\vspace{0.1cm}
Following the introduction of barcode entropy in \cite{cgg}, a variety of related notions have been developed in different Floer-theoretic and dynamical settings. A Morse-theoretic notion of barcode entropy was introduced in \cite{ggm}, where it was shown to be comparable to the topological entropy of the geodesic flow. The framework was extended to symplectic cohomology of Liouville domains in \cite{fls} and the barcode entropy was shown to admit an upper bound in terms of the topological entropy of the Reeb flow on the boundary. This result was later complemented by a matching lower bound in \cite{cggm}. In the original work \cite{cgg}, no lower bound was obtained for the Lagrangian barcode entropy $\hbar(\varphi_H;L)$. Such a lower bound was established later in \cite{m}, further clarifying the dynamical significance of this invariant. More recently, analogous upper and lower bounds for barcode entropy have been obtained in the setting of wrapped Floer cohomology; see \cite{f,f2}.

\vspace{0.2 cm}

Despite these developments, existing results on barcode entropy have primarily focused on absolute Floer-theoretic invariants. Relative symplectic cohomology occupies a fundamentally different position: it captures Floer-theoretic information that depends not only on the intrinsic geometry of a Liouville domain but also on how the domain is situated inside a larger symplectic manifold. This additional flexibility raises new conceptual questions about how persistence and entropy should reflect both ambient and boundary dynamics. The goal of this paper is to address this gap by developing a notion of barcode entropy for relative symplectic cohomology and by establishing a precise quantitative relation with the dynamics of the Reeb flow on the boundary.

\subsection{Main results}
In this subsection, we state our main results. As indicated above, we will introduce the barcode entropy of the relative symplectic cohomology $SH_M(K)$. Our primary contribution is to establish an upper bound for this barcode entropy: it is bounded above by a constant multiple of the topological entropy of the Reeb flow. The constant depends on how the subset $K$ sits inside the ambient symplectic manifold $M$, reflecting the genuinely relative nature of the invariant.  

\vspace{0.2cm}

Let $(M,\omega)$ be a closed symplectic manifold and $K\subset M$ be a compact subset. Varolgunes \cite{v} introduced the relative symplectic cohomology $SH_M(K)$ of $K$ in $M$. This relative symplectic cohomology admits a persistence module structure under a suitable condition, which we now explain. A symplectic manifold $(M,\omega)$ is called \textit{symplectically aspherical} if $\omega|_{\pi_2(M)}=0$ and $c_1(TM)|_{\pi_2(M)}=0$ where $c_1(TM)$ is the first Chern class of the tangent bundle $TM$ of $M$. For our purposes, the subset $K \subset M$ is required to be a \textit{Liouville domain}. This means that $K$ is a submanifold of $M$ of codimension 0 with smooth boundary and there exists a vector field $X$ on $K$ such that $\mathcal{L}_X \omega = \omega$. A key feature of a Liouville domain is that its boundary $\partial K$ carries a natural contact structure: the contact form $\alpha$ is given by the contraction of the symplectic form $\omega$ with the vector field $X$, namely, $\alpha = \iota_X\omega|_{\partial K}$. For such a Liouville domain $K \subset M$, we say that the contact manifold $(\partial K, \alpha)$ is \textit{index-bounded} if for each $k\in\ZZ$, the set of periods of Reeb orbits on $(\partial K, \alpha)$ that are contractible in $M$ and have Conley-Zehnder index $k$ of $(\partial K, \alpha)$ is bounded. 

\vspace{0.2cm}

Following the definition of the barcode entropy of a persistence module given in \cite{cgg}, we can define the \textbf{relative symplectic cohomology barcode entropy} and we denote it by $\hbar(SH_M(K))$. Informally, this quantity measures the exponential growth rate of the ``not-too-short" bars in the barcode of the persistence module $SH_M(K)$: For $\epsilon>0$, let $$b_\epsilon(\textrm{tru}(SH_M(K)),\sigma)$$ be the number of bars of the truncation of the persistence module $SH_M(K)$ at $\sigma$ whose lengths are greater than or equal to $\epsilon$. Note that $b_\epsilon(\textrm{tru}(SH_M(K)),\sigma)$ is an increasing function of $\sigma$ and a decreasing function of $\epsilon$. We define
\begin{align*}
    \hbar_\epsilon(SH_M(K)) = \limsup_{\sigma \to \infty} \frac{1}{\sigma}\log^+b_\epsilon(\textrm{tru}(SH_M(K)),\sigma),
\end{align*}
and
\begin{align*}
    \hbar(SH_M(K)) = \lim_{\epsilon \to 0} \hbar_\epsilon(SH_M(K))
\end{align*}
where $\log^+a = \max\{\log_2 a, 0\}$.
\vspace{0.2cm}

Our result is that the barcode entropy $\hbar(SH_M(K))$ is bounded above by a constant multiple of the topological entropy $h_{\textrm{top}}(\varphi_\alpha^t)$ of the Reeb flow $\varphi_\alpha^t$ on $(\partial K, \alpha)$, where the constant depends on the embedding of $K$ inside $M$.

\begin{introtheorem}[Theorem \ref{thmare}]\label{thma}
    Let $(M,\omega)$ be a closed symplectic manifold and  $K \subset M$ be a Liouville domain. Assume that $(M,\omega)$ is symplectically aspherical and $(\partial K, \alpha)$ is index-bounded. Then there exists a constant $C=C(M,K) >0$, depending on the pair $(M,K)$, such that
    \begin{align*}
        \hbar(SH_M(K)) \leq C \,h_{\textrm{top}}(\varphi_\alpha^1)
    \end{align*}
    where $\varphi_\alpha^t$ is the Reeb flow on $(\partial K, \alpha)$.
\end{introtheorem}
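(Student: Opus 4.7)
The plan is to adapt to the relative setting the strategy developed by Fender, Lee, and Sandon \cite{fls} for ordinary symplectic cohomology of Liouville domains. The argument will have three main ingredients: a cofinal family of Hamiltonians in the acceleration data for $SH_M(K)$ whose $1$-periodic orbits are explicitly identified with Reeb orbits on $\partial K$; a translation between the barcode of $SH_M(K)$ and the generators of the underlying Floer complexes; and a geometric entropy bound on the number of Reeb orbits of period at most $T$.

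First I would fix a Liouville collar neighborhood $U \simeq (1-\epsilon_0, 1+\epsilon_0)\times \partial K$ of $\partial K$ inside $M$, produced by the Liouville vector field on $K$ and its (negative) extension into a tubular neighborhood of $\partial K$ in $M\setminus K$. The size $\epsilon_0 = \epsilon_0(M,K)$ of this collar is the main geometric source of the constant $C(M,K)$. I would then construct an acceleration family $\{H_n\}$ such that each $H_n$ vanishes on the interior of $K$, depends only on the collar coordinate $r$ with slope $a_n \to \infty$, and is a suitable large constant on $M \setminus U$. The non-constant $1$-periodic orbits of $H_n$ are then in bijection with closed Reeb orbits $\gamma$ on $\partial K$ whose period $T$ satisfies $T = H_n'(r)$ for some $r$, and their Floer action $A(x)$ is of the form $A(x) = a_n\, T + O(1)$, with the $O(1)$ error determined by the fixed shape of $H_n$ on the collar.

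Next, I would transfer the count of bars to a count of Floer generators. Using the normal form theorem and the fact that bars are detected by the filtered chain complex, the quantity $b_\epsilon(\textrm{tru}(SH_M(K)),\sigma)$ is controlled, up to additive topological constants coming from $K$, by the dimension of a filtered subquotient $CF^{[\sigma - c,\sigma + c]}(H_n)$ for $n$ sufficiently large. Combining the action--period formula above with the index-boundedness hypothesis (which bounds, for each $k\in\ZZ$, the number of Reeb orbits of fixed Conley--Zehnder index $k$ and bounded period), this dimension is in turn bounded by the number of closed Reeb orbits on $\partial K$ of period controlled by a linear function of $\sigma$. Concretely, one aims for an estimate of the form
\begin{align*}
    b_\epsilon(\textrm{tru}(SH_M(K)),\sigma) \;\leq\; C_1 \cdot \#\setc{\gamma}{\gamma \text{ a Reeb orbit on } \partial K,\ \text{period}(\gamma) \leq C_2\, \sigma},
\end{align*}
for constants $C_1, C_2 = C_i(M,K) > 0$. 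Finally, I would invoke the Yomdin--Newhouse type bound
\begin{align*}
    \#\{\gamma : \text{period}(\gamma) \leq T\} \;\leq\; \exp\bigl((h_{\textrm{top}}(\varphi_\alpha^1) + \delta)\,T\bigr),
\end{align*}
valid for any $\delta > 0$ and all sufficiently large $T$, take logarithms, divide by $\sigma$, and send $\sigma \to \infty$, then $\delta \to 0$, then $\epsilon \to 0$, to conclude
\begin{align*}
    \hbar(SH_M(K)) \;\leq\; C_2 \, h_{\textrm{top}}(\varphi_\alpha^1).
\end{align*}

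The main obstacle will be the translation step: unlike the absolute Liouville case of \cite{fls}, where one extends by an infinite symplectization and chooses arbitrarily large slopes at fixed cost, here the collar has finite length and the acceleration family cannot realize arbitrarily large slopes on a neighborhood of uniform size. A delicate continuation-map argument is needed to ensure that bars of $SH_M(K)$ detected at action level $\sigma$ are indeed realized by Floer generators of some $H_n$ with action in a compatible window, and that the many constant orbits living in $M\setminus U$ do not contribute bars that inflate the count. The index-boundedness assumption is crucial at exactly this point, as it keeps the number of Floer generators in each action window finite and controls their distribution; the finite collar length is precisely what forces the factor $C(M,K)$ in front of $h_{\textrm{top}}(\varphi_\alpha^1)$.
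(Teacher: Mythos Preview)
Your outline contains a genuine gap at the final step. The inequality you call a ``Yomdin--Newhouse type bound,''
\[
\#\{\gamma : \text{period}(\gamma) \leq T\} \;\leq\; \exp\bigl((h_{\textrm{top}}(\varphi_\alpha^1) + \delta)\,T\bigr),
\]
is not a theorem. Yomdin's inequality bounds the \emph{volume growth} of iterated images of submanifolds by topological entropy; it says nothing about the growth rate of periodic orbits. There is no general upper bound of this form for the number of closed Reeb orbits (or fixed points of iterates) in terms of $h_{\textrm{top}}$, and indeed the exponential growth rate of periodic orbits can exceed topological entropy. So while your reduction of $b_\epsilon(\textrm{tru}(SH_M(K)),\sigma)$ to a count of Reeb orbits in a period window is correct in spirit (the number of bars is at most the number of chain generators), the chain then breaks: you have bounded barcode entropy by the growth rate of Reeb orbits, not by $h_{\textrm{top}}(\varphi_\alpha^1)$. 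Incidentally, the strategy in \cite{fls} that you cite does \emph{not} proceed via a direct orbit count either; it uses the same tomograph technology as \cite{cgg}.

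The paper's proof supplies exactly the missing mechanism. Rather than counting all generators, it exploits the fact that bars of length $>2\epsilon$ are stable under Hofer-small perturbations. One introduces a Lagrangian tomograph $\{\Delta_s\}_{s\in B}$ of the diagonal $\Delta \subset M\times M$ and observes that for almost every $s$ the count $N_\sigma(s)=|\Delta_s\cap\Gamma_{\varphi_H^\sigma}|$ (restricted to $K_{1+r_1}\times K_{1+r_1}$) still dominates $b_{2\epsilon}$. Crofton's inequality then bounds $\int_B N_\sigma(s)\,ds$ by a constant times $\textrm{Vol}(\Gamma_{\varphi_H^\sigma}\cap(K_{1+r_1}\times K_{1+r_1}))$, and now Yomdin's theorem legitimately applies to the volume growth of these graphs, giving $h_{\textrm{top}}(\varphi_H|_{K_{1+r_1}})=s_H\,h_{\textrm{top}}(\varphi_\alpha^1)$ as an upper bound. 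The constant $C(M,K)$ then arises from optimizing over the slope $s_H$ and the collar length, much as you anticipated. Your discussion of the collar geometry, the identification of orbits with Reeb orbits, and the role of index-boundedness is on target; what is missing is the tomograph--Crofton--Yomdin step replacing your orbit-counting inequality.
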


The constant $C(M,K)$ in Theorem~\ref{thma} arises from the choice of a collar neighborhood of $\partial K$ symplectically embedded in $M$. Fixing such a collar of finite length, one considers Hamiltonian functions that are linear with respect to the radial coordinate on a subregion of the collar. The estimate in Theorem~\ref{thma} involves the infimum of the slopes of such Hamiltonian functions, and the optimal bound is obtained by varying the collar length up to its maximal value allowed by the embedding. Consequently, the constant $C(M,K)$ reflects the quantitative size of the embedding of $K$ into $M$. Conceptually, this shows that the complexity of the persistence structure of relative symplectic cohomology is controlled by the dynamical complexity of the Reeb flow on the boundary of the Liouville domain. Unlike the symplectic cohomology of an isolated Liouville domain, the relative invariant retains information about the surrounding symplectic geometry near the boundary, highlighting its sensitivity to the chosen embedding of $K$ into $M$.

\vspace{0.2cm}

An immediate consequence of Theorem \ref{thma} is that the barcode entropy $\hbar(SH_M(K))$ is finite because the topological entropy of a smooth flow is known to be finite. Moreover, the growth of the number of not-too-short bars in $SH_M(K)$ is at most exponential.

\vspace{0.2cm}

\noindent
\textbf{Organization of the paper.} 
Since the material presented in this paper encompasses several fields of mathematics, we begin in Section 2 by collecting the basic notions and facts from each subject that will be used throughout the paper. This preliminary section is not intended to be comprehensive; readers are encouraged to consult the references provided in each subsection for further details. In Section 3, we elaborate on the definition and structure of the relative symplectic cohomology of a Liouville domain. In Section 4, we formally define the relative symplectic cohomology barcode entropy, $\hbar(SH_M(K))$, and explore its relation to the previously established barcode entropy associated with a Hamiltonian function. The proof of the upper bound, Theorem \ref{thma}, is carried out in Section 5, where we first explain the key concepts and ingredients required for the argument.

\vspace{0.2cm}

\noindent
\textbf{Acknowledgment.} The author thanks Rafael Fernandes and Joao Pering for helpful discussions, and is grateful to Yong-Geun Oh for his encouragement and insightful comments. The author also thanks Ely Kerman for reading an early draft of this paper.

\section{Preliminaries}
\subsection{From persistence module theory}
A concise review of persistence module theory will be presented below, with most of the exposition adapted from \cite{csgo, prsz}. Before giving the definition of persistence module, let us fix a ground field $\mathbbm{k}$ and we usually suppress this from our notation.

\begin{definition}
A \textbf{persistence module} is a pair $(V, \pi)$ where $V$ is a collection $\{V_\tau\}_{\tau\ \in\RR}$ of finite dimensional vector spaces over $\mathbbm{k}$ and $\pi = \{\pi_{\tau_1\tau_2} : V_{\tau_1} \lr V_{\tau_2}\}_{\tau_1, \tau_2 \in \RR}$ is a collection of linear maps from $V_{\tau_1}$ to $V_{\tau_2}$ for $\tau_1\leq \tau_2$ satisfying the following:
    \begin{enumerate}[label=(\alph*)]
        \item For any $\tau \in \RR$, $\pi_{\tau\tau} = id_{V_\tau}$ and for any $\tau_1 \leq \tau_2 \leq \tau_3$, $\pi_{\tau_1 \tau_3} = \pi_{\tau_2 \tau_3} \circ \pi_{\tau_1 \tau_2}$.
        \begin{align*}
            \includegraphics[]{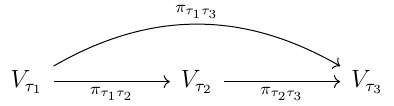}
        \end{align*}
        \item There exists a closed, bounded from below and nowhere dense subset $\textrm{Spec}(V, \pi) \subset \RR$, which is called the \textbf{spectrum} of $(V, \pi)$ such that $\pi_{\tau_1 \tau_2} : V_{\tau_1} \lr V_{\tau_2}$ is an isomorphism whenever $\tau_1$ and $\tau_2$ are in the same connected component of $\RR \setminus \textrm{Spec}(V,\pi)$.
        \item For any $\tau \in \RR$ and any $\tau' \leq \tau$ sufficiently close to $\tau$, the map $\pi_{\tau' \tau} : V_{\tau'} \lr V_\tau$ is an isomorphism.
        \item There exists $\tau_0 \in \RR$ such that $V_\tau = 0$ for $\tau\leq \tau_0$.
 \end{enumerate}
    We will usually use the notation $V$ to denote the persistence module $(V,\pi)$ when it is clear from the context.
\end{definition}

One crucial example of a persistence module is an interval persistence module. For an interval $I = (a,b], $ where $-\infty < a < b \leq\infty$, the \textbf{interval persistence module} $\mathbbm{k}I = (V, \pi)$ is defined by
\begin{align*}
    V_\tau = \begin{cases}
        \mathbbm{k} & \text{if}\,\,\tau \in I \\
        0 & \text{if} \,\, \tau\notin I,
    \end{cases}
\end{align*}
and 
\begin{align*}
    \pi_{\tau_1 \tau_2} = \begin{cases}
        \textrm{id}_{\mathbbm{k}} & \text{if}\,\,\tau_1,\tau_2 \in I\\
        0 &\text{otherwise.}
    \end{cases}
\end{align*}

\vspace{0.2 cm}
A key fact in the theory of persistence module is the \textit{normal form theorem}. Before stating it, we need the notion of equivalence between persistence modules. Let $(V, \pi)$ and $(V',\pi')$ be two persistence modules. A \textbf{morphism} $A : (V,\pi) \lr (V',\pi')$ is a family of linear maps $\{ A_\tau : V_\tau \lr V'_\tau\}_{\tau \in \RR}$ such that $A_{\tau_2} \circ \pi_{\tau_1 \tau_2} = \pi'_{\tau_1 \tau_2} \circ A_{\tau_1}$ for all $\sigma \leq \tau$.
        \begin{align*}
            \includegraphics[]{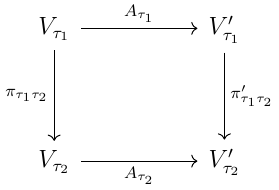}
        \end{align*}
Two persistence modules $(V, \pi)$ and $(V',\pi')$ are said to be \textbf{isomorphic} if  there exist two morphisms
\begin{align*}
    A : (V,\pi) \lr (V',\pi')\,\,\textrm{and}\,\,B : (V',\pi') \lr (V,\pi)
\end{align*}
such that $A \circ B : (V', \pi') \lr (V', \pi')$ is the identity morphism and $B \circ A : (V, \pi) \lr (V, \pi)$ is also the identity morphism. Also, we can define the \textbf{direct sum} $(V \oplus V',\pi\oplus\pi')$ of two persistence module $V$ and $V'$ by
\begin{align*}
    (V \oplus V')_\tau = V_\tau \oplus V'_\tau\,\,\textrm{and}\,\, (\pi\oplus\pi')_{\tau_1 \tau_2} = \pi_{\tau_1 \tau_2} \oplus \pi'_{\tau_1 \tau_2}
\end{align*}
and this definition extends in a straightforward manner to infinitely many persistence modules. 
\begin{theorem}[Normal form theorem]\label{nft}
    Let $(V, \pi)$ be a persistence module. Then there exists a countable collection $\{ (I_i, m_i)\}_{i=1,2,3,\cdots}$ of intervals $I_i$ with multiplicity $m_i$ such that
    \begin{align*}
        (V, \pi) \cong \bigoplus_{i=1}^\infty \left(\mathbbm{k} I_i\right)^{m_i}.
    \end{align*}
    Moreover, this decomposition is unique up to a permutation, that is, to any persistence module $(V,\pi)$, there exists a unique multiset $\mathcal{B}(V,\pi)$ consisting of intervals $I_i$ with multiplicity $m_i$. This multiset $\mathcal{B}(V,\pi)$ is called the \textbf{barcode} of the persistence module $V$ and an element of $\mathcal{B}(V,\pi)$ is called a \textbf{bar} of $(V,\pi)$.
\end{theorem}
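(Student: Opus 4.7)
The plan is to reduce the continuous-parameter problem over $\RR$ to a finite discrete one using conditions (b)--(d), apply the structure theorem for representations of equioriented type $A$ quivers (Gabriel's theorem) on each finite truncation, and then extend to all of $\RR$ by an exhaustion argument. Uniqueness will follow from the fact that the multiplicities of the bars are determined by numerical invariants of $(V,\pi)$.

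First I would use conditions (b), (c), and (d) to reduce to the discrete setting. Fix a large $T \in \RR$. Since $\textrm{Spec}(V,\pi)$ is closed and nowhere dense, $\RR \setminus \textrm{Spec}(V,\pi)$ is a union of open intervals on which the transition maps are isomorphisms, and by condition (c) the transition maps are also isomorphisms from the left at each spectral value. Combined with pointwise finite-dimensionality and condition (d), only finitely many connected components of $(-\infty, T] \setminus \textrm{Spec}(V,\pi)$ can contribute a change of dimension. Picking one representative $t_i$ in each component and using the isomorphisms of (b), the restriction of $(V,\pi)$ to $[\tau_0, T]$ becomes equivalent to a finite zigzag $V_{t_0} \lr V_{t_1} \lr \cdots \lr V_{t_n}$ with $V_{t_0}=0$, i.e., a representation of an equioriented $A_{n+1}$ quiver.

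Next I would invoke Gabriel's theorem, which states that every representation of an equioriented type $A$ quiver decomposes uniquely (up to permutation) as a direct sum of interval indecomposables. Translating back to $\RR$ via the chosen representatives, this yields a decomposition of $(V,\pi)|_{[\tau_0, T]}$ as a finite direct sum of interval modules $\mathbbm{k}(a,b]$ with endpoints in $\textrm{Spec}(V,\pi) \cup \{\infty\}$. To extend to all of $\RR$, I would exhaust $\RR$ by intervals $[\tau_0, T_m]$ with $T_m \to \infty$ and use the uniqueness of the decomposition on each finite piece to match bars across successive truncations: each bar appearing on $[\tau_0, T_m]$ either persists in $[\tau_0, T_{m+1}]$ or closes at a spectral value $\le T_m$. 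The union over $m$ produces a countable multiset $\mathcal{B}(V,\pi)$, and the module $\bigoplus_i (\mathbbm{k}I_i)^{m_i}$ agrees with $(V,\pi)$ at every parameter.

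For uniqueness, I would observe that for any $\tau_1 \leq \tau_2$, the rank of $\pi_{\tau_1 \tau_2}$ equals the number of bars (with multiplicity) containing both $\tau_1$ and $\tau_2$; an inclusion–exclusion over small neighborhoods of spectral endpoints then recovers the multiplicity of each individual bar. The main obstacle I anticipate is the bookkeeping in the passage from finite truncations to all of $\RR$: one must verify that the finite decompositions are compatible under restriction, so that bars extend or close unambiguously, and that only countably many bars appear despite $\textrm{Spec}(V,\pi)$ potentially being uncountable. Both points are ultimately controlled by the pointwise finite-dimensionality of the $V_\tau$, which limits how many ``birth'' or ``death'' events can accumulate, but making this rigorous is the technical heart of the argument; everything else is a routine application of the $A_n$ quiver classification.
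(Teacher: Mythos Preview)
The paper does not give its own proof of this theorem. Theorem~\ref{nft} is stated in the preliminaries section as a standard fact from the literature, with the surrounding exposition attributed to \cite{csgo} and \cite{prsz}; no argument is supplied. So there is nothing in the paper to compare your proposal against.

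That said, your sketch follows one of the standard routes to the normal form theorem (reduction to representations of equioriented $A_n$ quivers via Gabriel's theorem, then an exhaustion), and the rank-invariant argument for uniqueness is also standard. One point where your outline is loose: you assert that on $(-\infty,T]$ only finitely many components of the complement of the spectrum ``contribute a change of dimension,'' but the definition in this paper allows $\textrm{Spec}(V,\pi)$ to be any closed, bounded-below, nowhere dense set, which need not be discrete or even countable (think of a Cantor set). Pointwise finite-dimensionality together with condition~(c) does force the dimension function $\tau\mapsto\dim V_\tau$ to be piecewise constant with only finitely many jumps on each bounded interval, but that requires a short argument you have not written down (e.g., left-continuity plus integer values plus nonnegativity). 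You correctly flag this as the technical heart; in the cited references this is handled either by working in the pointwise-finite-dimensional (p.f.d.) category directly via Crawley--Boevey's decomposition theorem, or by first showing the spectrum is effectively discrete under the stated hypotheses.
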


We end this subsection by introducing a useful operation on persistence modules which will play an important role in the sequel. Let $\sigma\in \RR$. The \textbf{truncation} $\text{tru}(V,\pi ;\sigma ) = (\text{tru}(V;\sigma), \text{tru}(\pi;\sigma))$ of $(V,\pi)$ is defined by
        \begin{align*}
            \text{tru}(V;T)_\tau = \begin{cases}
                V_\tau &\text{if}\,\, \tau<\sigma\\
                0 &\text{otherwise}
            \end{cases}
         \end{align*}
        and
        \begin{align*}
            \text{tru}(\pi;\sigma)_{\tau_1 \tau_2} = \begin{cases}
                \pi_{\tau_1 \tau_2}&\text{if}\,\, \tau_2< \sigma\\
                0 &\text{otherwise}.
            \end{cases}
        \end{align*}
\vspace{0.3 cm}
\subsection{From Floer theory}\label{floer} This subsection reviews the fundamental aspects of Floer theory required in the sequel. Experienced readers may wish to proceed directly to the next section.

\subsubsection{Hamiltonian Floer cohomology}
We begin by introducing the ground ring. The \textbf{Novikov field} $\Lambda$ is defined by
\begin{align*}
    \Lambda = \left\{ \sum_{i=1}^\infty a_i T^{\lambda_i} \bigmid a_i \in \QQ, \lambda_i \in \RR\,\,\text{and}\,\, \lim_{i \to \infty} \lambda_i = \infty \right\} 
\end{align*}
where $T$ is a formal variable. There is a \textbf{valuation map} $\textrm{val} : \Lambda \to \RR \cup \{\infty\}$ given by
\begin{align*}
    \textrm{val} (x) = 
    \begin{cases}
      \displaystyle\min_{i} \{\lambda_i \mid a_i \neq 0 \} \,&\text{if}\,\, x = \displaystyle\sum_{i=1}^\infty a_i T^{\lambda_i} \neq 0\\
      \infty \,&\text{if}\,\, x = 0.
    \end{cases}   
\end{align*}
For any $r \in \RR$, define $\Lambda_{\geq r} = \textrm{val}^{-1}([r,\infty])$. In particular, we call
\begin{align*}
    \Lambda_{\geq 0} = \left\{ \sum_{i=1}^\infty a_i T^{\lambda_i} \in \Lambda \bigmid  \lambda_i \geq 0 \right\}
\end{align*}
the \textbf{Novikov ring}. 
\vspace{0.2 cm}

\par Let $(M, \omega)$ be a symplectic manifold and let $H : S^1 \times M \to \RR$ be a Hamiltonian function on $M$. The \textbf{Hamiltonian vector field} $X_H$ of H is defined by 
\begin{align*}
    \iota_{X_H} \omega = dH.
\end{align*}
We say a Hamiltonian function $H :S^1 \times M \to \RR$ is \textbf{nondegenerate} if every 1-periodic orbit $x$ of $X_H$ is nondegenerate, that is, for the Hamiltonian flow $\phi_H^t$ of $H$, the map $$(d \phi^1_H)_{x(0)} : T_{x(0)}M \lr T_{x(1)}M = T_{x(0)}M$$ has no eigenvalue equal to 1. We denote the set of all nondegenerate contractible 1-periodic orbits of $X_H$ by $\mathcal{P}(H)$. Let $x \in \mathcal{P}(H)$ and $c_x$ be a disk capping. The \textbf{action} of $(x, c_x)$ is defined by
\begin{align}\label{act}
    \mathcal{A}_H(x, c_x) = \int_{D^2} c_x^* \omega + \int_{S^1} H(t, x(t))dt.
\end{align}
We can associate to a pair $(x, c_x)$ an integer using the \textit{Conley-Zehnder index}, which we denote by $\mu_{\text{CZ}}(x,c_x)$. Define an equivalence relation on the set of pairs of a contractible 1-periodic orbit and its capping by
\begin{align*}
    (x, c_x) \sim (y, c_y) \,\,\text{if and only if}\,\, x = y, \,\mathcal{A}_H(x, c_x) = \mathcal{A}_H(y, c_y) \,\,\text{and}\,\, \mu_{\text{CZ}}(x,c_x) = \mu_{\text{CZ}}(y,c_y).
\end{align*}
We denote the equivalence class of $(x, c_x)$ by $[x,c_x]$ and denote the set of all equivalence classes by $\widetilde{\mathcal{P}}(H)$. The \textbf{Floer complex of $H$} is defined by
\begin{align*}
    CF(H) = \left\{\sum_{i=1}^{\infty}a_i [x_i, c_{x_i}] \bigmid a_i \in \QQ,  [x_i, c_{x_i}] \in  \widetilde{\mathcal{P}}(H)\,\,\text{and}\,\, \lim_{i \to \infty} \mathcal{A}_H([x_i, c_{x_i}]) = \infty \right\}.
  \end{align*}
The grading of $CF(H)$ is given by the \textit{Conley-Zehnder index} (See \cite{sz}), that is,
\begin{align*}
    | [x, c_x]| = \mu_{\text{CZ}}([x, c_x]).
\end{align*}
In particular, the $k$-th Floer complex $CF^k(H)$ of $H$ is given by
\begin{align*}
   CF^k(H)= \left\{\sum_{i=1}^{\infty}a_i [x_i, c_{x_i}] \in CF(H) \bigmid \mu_{\text{CZ}}([x, c_x])=k  \right\}.
\end{align*}
Fix a generic almost complex structure $J$ on the tangent bundle $TM$ of $M$. For $x, y \in \mathcal{P}(H)$, let $\pi_2(M, x, y)$ be the set of homotopy classes of smooth maps from $\RR \times S^1$ to $M$ which are asymptotic to $x$ and $y$ at $-\infty$ and $\infty$, respectively. For $[x, c_x], [y, c_y] \in \widetilde{\mathcal{P}}(H)$ and $A \in \pi_2(M,x,y)$, the moduli space $\mathcal{M}(H,J; [x, c_x], [y, c_y]; A)$ is the set of smooth maps $u : \RR \times S^1 \lr M$ satisfying the following. \vspace{0.2cm}
\begin{itemize}
    \item The homotopy class of $u$ represents $A \in \pi_2(M,x, y)$.\\
    \item $\displaystyle\frac{\partial u}{\partial s} + J(u) \left( \frac{\partial u}{\partial t} - X_{H}(u) \right) = 0$.\\
    \item $\displaystyle\lim_{s \to -\infty} u(s,t) = x(t)$ and $\displaystyle\lim_{s \to \infty} u(s,t) = y(t)$.\\
    \item $[y, c_y] = [y, c_x\,\#\, (-u)] = [y, c_x\,\#\, (-A)] $ where $\#$ denotes the connected sum.\vspace{0.2cm}
\end{itemize}
Note that if $u \in \mathcal{M}(H,J; [x, c_x], [y, c_y]; A)$, then $ \mathcal{A}_H([x, c_x]) \leq \mathcal{A}_H([y,c_y])$ and $\mu_{\text{CZ}}([x, c_x]) \leq \mu_{\text{CZ}}([y, c_y])$. The \textbf{Floer differential} $d : CF^*(H) \lr CF^{*+1}(H)$ of the Floer complex $CF(H)$ is given by
\begin{align*}
    d [x, c_x] = \sum_{\substack{[y, c_y] \in \widetilde{\mathcal{P}}(H) \\ A \in \pi_2(M, x, y)}} \#\mathcal{M}(H,J ; [x, c_x], [y, c_y]; A) T^{E_{\text{top}}(u)} [y, c_y]
\end{align*}
where $\#\mathcal{M}(H ; [x, c_x], [y, c_y]; A)$ is the virtual count of the moduli space $\mathcal{M}(H ; [x, c_x], [y, c_y]; A)$ (see \cite{p}) and the \textbf{topological energy} $E_{\text{top}}(u)$ of a Floer trajectory $u$ is given by
\begin{align*}
    E_{\text{top}}(u) = \int_{S^1} H_t(y(t)) dt - \int_{S^1} H_t(x(t)) dt + \omega(A).
\end{align*}
It can be easily shown that the topological energy is given by the difference of actions of two capped orbits connected by a Floer trajectory: $$E_{\text{top}}(u) = \mathcal{A}_H([y,c_y]) - \mathcal{A}_H([x, c_x]).$$ Since the Floer differential $d$ satisfies $d \circ d = 0$, the \textbf{Floer cohomology} $HF(H)$ of $H$ is defined to be 
\begin{align*}
    HF^*(H) = H^* \left( CF(H), d \right).
\end{align*}

\vspace{0.2 cm}

The Floer cochain complex admits a filtration by the action functional. For each $L \in \RR\cup \{-\infty\}$, define
\begin{align*}
    CF^{> L}(H) = \left\{ \sum_{i=1}^{\infty}a_i [x_i, c_{x_i}] \in CF(H) \bigmid \mathcal{A}_H([x_i, c_{x_i}]) > L\,\,\text{for all}\,\,i \right\}.
\end{align*}
This subset $CF^{> L}(H)$ of $CF(H)$ is actually a subcomplex of $CF(H)$ because the Floer differential increases the action. Therefore, we can define
\begin{align*}
    HF^{> L, *} (H) = H^* (CF^{> L}(H), d).
\end{align*}
For $L<L'$, we further define the truncated Floer cohomology
\begin{align*}
    HF^{(L,L'],*}(H) = H^*\left( CF^{>L}(H) / CF^{>L'}(H), \overline{d} \right)
\end{align*}
where $\overline{d}$ is the differential of $CF^{>L}(H) / CF^{>L'}(H)$ induced by $d$ on the quotient complex..

\vspace{0.2 cm}

For two Hamiltonian functions $H_1$ and $H_2$ with $H_1 \leq H_2$, choose a monotone homotopy $H_s$ of Hamiltonian functions from $H_1$ to $H_2$. Then there exists a \textbf{continuation map} 
\begin{align*}
   c^{H_1, H_2} : CF(H_1) \lr CF(H_2)
\end{align*}
defined by the suitable count of Floer trajectories of $H_s$ connecting the orbits of $H_1$ and $H_2$. This  map is also weighted by $T^{E_{\text{top}}(u)}$ in a similar way that we define the Floer differential above. Every continuation map induces a map $HF(H_1) \lr HF(H_2)$. Note that continuation maps increase the actions and respect the gradings, and hence the map
\begin{align*}
    c^{H_1, H_2} : CF^{>L}(H_1) \lr CF^{>L}(H_2)
\end{align*}
makes sense and this also induces a map $HF^{>L}(H_1) \lr HF^{>L}(H_2)$. Of course, the continuation map induces a map $HF^{(L,L']}(H_1) \lr HF^{(L,L']}(H_2)$.

\vspace{0.2 cm}

The Floer cohomology $HF(H;\Lambda)$ of $H$ with coefficient in $\Lambda$ is defined by
\begin{align*}
    HF^*(H;\Lambda) = H^*\left( CF(H) \ton \Lambda \right) = HF^*(H)\ton \Lambda.
\end{align*}

\vspace{0.2 cm}

\subsubsection{Relative symplectic cohomology}
We briefly recall the relative symplectic cohomology introduced by Varolgunes in \cite{v}. Detailed explanation and its application can also be found in \cite{a, a2, dgpz, msv, sun}. 

\vspace{0.2 cm}

A major ingredient of relative symplectic cohomology is to complete the Floer complex, which we will explain now. For any $\Lambda_{\geq0}$-module $A$, we can define its completion as follows: For $r' >r$, there exists a map 
\begin{align*}
    \Lambda_{\geq 0} / \Lambda_{\geq r'} \lr \Lambda_{\geq 0} / \Lambda_{\geq r} 
\end{align*}
and this map induces a map
\begin{align}\label{inverse}
    A \ton \Lambda_{\geq 0} / \Lambda_{\geq r'} \lr A \ton \Lambda_{\geq 0} / \Lambda_{\geq r}.
\end{align}
Then $\left\{ A \ton \Lambda_{\geq 0} / \Lambda_{\geq r} \right\}_{r>0}$ is an inverse system with the maps given by \eqref{inverse}. The \textbf{completion} $\widehat{A}$ of $A$ is defined by
\begin{align*}
    \widehat{A} = \varprojlim_{r \to 0} A \otimes \Lambda_{\geq 0} / \Lambda_{\geq r}.
\end{align*}

\vspace{0.2 cm}

Let $(M, \omega)$ be a closed symplectic manifold and let $K \subset M$ be a compact subset. We say that a nondegenerate Hamiltonian function $H$ is \textbf{K-admissible} if $H$ is negative on $S^1 \times K$. We denote the set of all $K$-admissible Hamiltonian functions by $\mathcal{H}_K$. We can define the \textbf{relative symplectic cohomology} $SH_M(K)$ of $K$ in $M$ by
\begin{align}\label{reldef}
    SH^*_M(K) = H^* \left( \widehat{\varinjlim_{H \in \mathcal{H}_K}} CF(H) \right)
\end{align}
where $\displaystyle\widehat{\varinjlim_{H \in \mathcal{H}_K}} CF(H)$ denotes the completion of $\displaystyle\varinjlim_{H \in \mathcal{H}_K} CF(H)$. Also, define 
\begin{align*}
    SH^*_M(K; \Lambda) = SH^*_M(K)\ton \Lambda.
\end{align*}

\vspace{0.3 cm}

\subsection{From dynamical systems theory} 
In this subsection, we provide a summary of some facts from the theory of dynamical systems that we need throughout this paper. For a comprehensive exposition, see \cite{kh, th}.

\vspace{0.2cm}

Let $(X,d)$ be a compact metric space and let $f : X \lr X$ be a continuous map. Define an increasing sequence $\{d_\ell^f\}$ of metrics for $\ell=1,2,3,\cdots$ by
\begin{align}\label{metric}
    d_\ell^f(x,y) = \max_{1 \leq i\leq \ell-1} d(f^i(x),f^i(y))
\end{align}
where $f^i$ denotes the composition of $f$ with itself $i$ times. Let
\begin{align*}
    B_f(x,\epsilon,\ell) = \left\{y \in X \,\bigmid \, d_\ell^f(x,y) < \epsilon \right\}.
\end{align*}
A set $E \subset X$ is said to be \textbf{$(\ell,\epsilon)$-spanning} if 
\begin{align*}
    X \subset \bigcup_{x \in E} B_f(x,\epsilon,\ell).
\end{align*}
Let $S_d(f,\epsilon,\ell)$ be the minimal cardinality of an $(\ell,\epsilon)$-spanning set. Then the \textbf{topological entropy} $h_{\textrm{top}}(f)$ of $f$ is defined by
\begin{align*}
   h_{\textrm{top}}(f) = \lim_{\epsilon \to 0} \limsup_{\ell \to \infty} \frac{1}{\ell} \log S_d(f,\epsilon,\ell).
\end{align*}
Obviously, the topological entropy of the identity map is zero. The definition of the topological entropy for a flow is completely parallel. Let $F=\{f^t\}_{t \in \RR }$ be a continuous flow on $X$. The counterpart of the metric \eqref{metric} is the family of metrics
\begin{align*}
    d_T^{F} (x,y) = \max_{0\leq t \leq T} d(f^t(x),f^t(y))
\end{align*}
for each $T \in \RR$. We can define the topological entropy $h_{\textrm{top}}(F) = h_{\textrm{top}}(f^t)$ of the flow $F$ by
\begin{align*}
    h_{\textrm{top}}(F) = \lim_{\epsilon \to 0} \limsup_{T \to \infty} \frac{1}{T} \log S_d(F,\epsilon,T).
    \end{align*}
    Note that the topological entropies of a map and a flow do not depend on the choice of the metric $d$. Some useful properties of topological entropy are collected below.\vspace{0.2cm}
    \begin{itemize}
        \item $h_{\textrm{top}}(F) = h_{\textrm{top}}(f^1)$.\vspace{0.2cm}
        \item For each $t\in \RR$, $h_{\textrm{top}}(f^t) = |t| h_{\textrm{top}}(f^1)$.\vspace{0.2cm}
        \item For each $m \in \ZZ$ and $t \in \RR$, $h_{\textrm{top}}(f^{mt}) = |m| h_{\textrm{top}}(f^t)$.\vspace{0.2cm}
        \end{itemize}
These properties indicate that the topological entropy of a flow $\{f^t\}_{t \in \RR}$ is controlled by its time-1 map $f^1$.

\vspace{0.2cm}

In fact, we will not use the definition of the topological entropy directly in this paper; instead, we will use the volume growth rate. Let $X$ be a smooth Riemannian manifold and $f : X \lr X$ be a smooth map. Denote the graph of $f$ by $\Gamma_f$, that is,
\begin{align*}
    \Gamma_f = \left\{(x,f(x))\in X \times X\bigmid x\in X\right\}.
\end{align*}
For a smooth submanifold $Y$ of $X$, the exponential volume growth rate $h_{\textrm{vol}}(f|_Y)$ is defined to be
\begin{align*}
   h_{\textrm{vol}}(f|_Y) = \limsup_{\ell \to \infty} \frac{1}{\ell} \log  \textrm{Vol}\left(\Gamma_{f^\ell|_Y} \right)
\end{align*}
  where $\textrm{Vol}$ denotes the volume induced by the given Riemannian metric. We have two exponential growth rates associated with $f$; the topological entropy $h_{\textrm{top}}(f|_Y)$ and the volume growth rate $h_{\textrm{vol}}(f|_Y)$. These quantities are related as follows by Yomdin's theorem.
  \begin{theorem}[Yomdin]\label{y}
      Let $X$ be a smooth Riemannian manifold and $f : X \lr X$ be a smooth map. For a smooth submanifold $Y$ of $X$, we have
      \begin{align}\label{yi}
          h_{\textrm{vol}}(f|_Y) \leq h_{\textrm{top}}(f|_Y).
      \end{align}
    \end{theorem}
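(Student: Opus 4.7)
The plan is to bound $\mathrm{Vol}(\Gamma_{f^\ell|_Y})$ by covering $Y$ with Bowen-type dynamical balls drawn from a minimal $(\ell,\epsilon)$-spanning set, and then to control the volume of the graph over each such ball by means of Yomdin's \emph{reparametrization lemma}. First, I would use the area formula to rewrite
\begin{align*}
    \mathrm{Vol}(\Gamma_{f^\ell|_Y}) = \int_Y \sqrt{\det\bigl(I + (Df^\ell|_Y)^{*}(Df^\ell|_Y)\bigr)}\, d\mathrm{vol}_Y,
\end{align*}
so that the estimate reduces to one on the average distortion of $Y$ under $f^\ell$.

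Next, for fixed $\epsilon>0$ and a minimal $(\ell,\epsilon)$-spanning set $E\subset X$, the Bowen balls $\{B_f(x,\epsilon,\ell)\}_{x\in E}$ cover $X$ and in particular cover $Y$. The naive bound on the integrand by $\|Df^\ell\|^{\dim Y}\leq \|Df\|^{\ell\dim Y}$ is exponential in $\ell$ and thus useless; the point is to replace it by a far sharper estimate adapted to dynamical balls. The key input is Yomdin's reparametrization lemma: for each integer $r\geq 1$ and each $x\in E$, one can cover $Y\cap B_f(x,\epsilon,\ell)$ by the images of at most $C(r)\, e^{\ell\,\delta(r)}$ smooth charts $\psi : B^{\dim Y}\to Y$ such that all iterates $f^i\circ\psi$, $0\leq i\leq \ell$, have $C^r$-norm bounded by a universal constant, where $\delta(r)\to 0$ as $r\to\infty$. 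A direct change of variables then bounds the contribution of each chart to the integral above by a constant $C'$ depending only on $r$ and the ambient geometry.

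Combining these ingredients, I would obtain
\begin{align*}
    \mathrm{Vol}(\Gamma_{f^\ell|_Y}) \leq S_d(f,\epsilon,\ell) \cdot C(r)\, e^{\ell\,\delta(r)} \cdot C',
\end{align*}
and hence
\begin{align*}
    h_{\mathrm{vol}}(f|_Y) \leq \limsup_{\ell\to\infty}\frac{1}{\ell}\log S_d(f,\epsilon,\ell) + \delta(r).
\end{align*}
Letting $\epsilon\to 0$ first, and then $r\to\infty$ so that $\delta(r)\to 0$, yields the desired inequality $h_{\mathrm{vol}}(f|_Y)\leq h_{\mathrm{top}}(f|_Y)$.

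The hard part, and the genuine heart of Yomdin's theorem, is the reparametrization lemma itself. It would be proved by induction on the iterate count, combining a semialgebraic reparametrization result for $C^r$-images of unit balls with a Taylor approximation of order $r$: the order-$r$ Taylor polynomial of $f$ absorbs most of the stretching into a polynomial (hence semialgebraic) map that can be chopped into a bounded number of pieces with controlled geometry, while the remainder, of size $O(\epsilon^{r+1})$, contributes only the subexponential factor $e^{\ell\,\delta(r)}$ with $\delta(r)\to 0$ as $r\to\infty$. This is also where $C^\infty$ regularity is essential, since $\delta(r)\to 0$ requires taking $r$ arbitrarily large; indeed, the inequality is known to fail for maps of only finite smoothness.
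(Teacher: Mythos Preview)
The paper does not prove this statement at all: its entire ``proof'' is the single line ``See \cite{g} and \cite{y}.'' So there is nothing to compare at the level of argument---you have supplied a genuine sketch where the paper only gives a citation.

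That said, your outline is faithful to the actual proof in \cite{y} and \cite{g}. Covering by dynamical balls from a minimal $(\ell,\epsilon)$-spanning set, invoking the $C^r$ reparametrization lemma to control the graph volume over each ball by $C(r)e^{\ell\delta(r)}$ many charts with bounded $C^r$ norm, and then letting $r\to\infty$ so that $\delta(r)\to 0$: this is exactly Yomdin's strategy, and your identification of the semialgebraic reparametrization of the degree-$r$ Taylor jet as the technical core is correct.

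One small point of formulation: in the standard statement one bounds $h_{\mathrm{vol}}(f|_Y)$ by $h_{\mathrm{top}}(f)$ on the whole of $X$, since if $Y$ is not $f$-invariant the quantity $h_{\mathrm{top}}(f|_Y)$ is not a priori defined. The paper is slightly loose here as well; in its application (Theorem~\ref{thmare}) the restriction is to an invariant set, so no harm is done. Your spanning-set argument as written produces the bound by $h_{\mathrm{top}}(f)$, which is what is actually needed and used.
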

    \begin{proof}
        See \cite{g} and \cite{y}.\\
    \end{proof}
\begin{remark}
    In Theorem \ref{y} (and throughout this paper), the smoothness of $f$ is understood to mean $C^\infty$-smoothness. If the map $f$ is assumed to be merely $C^r$-smooth for some $r< \infty$, then an additional term, depending on $r$, must be added on the right-hand side of the inequality \eqref{yi}. See \cite{g} and \cite{y} for further details.
\end{remark}    
\vspace{0.2cm}  
\section{Relative symplectic cohomology of a Liouville domain}
This section is devoted to additional properties of relative symplectic cohomology, focusing on the situation where the subset $K$ is a Liouville domain. 
\subsection{Reformulation of the definition}
A compact symplectic manifold $K$ has a \textbf{contact type boundary} if exists an outward pointing vector field $X$ defined on a neighborhood of $\partial K$ satisfying $\mathcal{L}_X \omega = \omega$.  We call the vector field $X$ a \textbf{Liouville vector field}. A fundamental property of a contact type boundary $\partial K$ is that it carries a natural contact structure whose contact form $\alpha$ is given by
\begin{align*}
    \alpha = \iota_X \omega|_{\partial K}.
\end{align*}
If the Liouville vector field $X$ is defined on $K$, then we call $K$ a \textbf{Liouville domain}. Denote by $\widehat{K}$ the \textbf{symplectic completion} of $K$, that is,
\begin{align*}
    \widehat{K} = K \cup_{\partial K} \left( \partial K \times [1, \infty) \right)
\end{align*}
with the symplectic form $\widehat{\omega}$ on $\partial K \times [1,\infty)$ is given by $\widehat{\omega} = d(r \alpha)$ where $\alpha $ is the contact form on $\partial K$ and $r$ is the coordinate on $[1,\infty)$. We will denote the Reeb vector field and the Reeb flow on the contact manifold $(\partial K, \alpha)$ by $R_\alpha$ and $\varphi_\alpha^t$, respectively.

\vspace{0.2 cm}

Let $(M, \omega)$ be a closed symplectic manifold and $K \subset M$ be a compact domain\footnote{By a domain, we mean a submanifold of codimension zero with smooth boundary.} with contact type boundary. Although the symplectic completion $\widehat{K}$ of $K$ cannot be embedded inside of $M$, we can still consider a small collar neighborhood $\partial K \times [1,1+3r_1]$ of $\partial K$ in $M$ for some $r_1 >0$. We say that a Hamiltonian function $H : S^1 \times M \to \RR$ is \textbf{contact type $K$-admissible} if it satisfies the following.\vspace{0.2 cm}
\begin{enumerate}[label=(A\arabic*)]
    \item $H$ is negative on $S^1 \times K$.\vspace{0.2 cm}
    \item $H(t, p, r) =  h(r)$ on $S^1 \times \left(\partial K \times [1, 1+3r_1]\right)$ such that\vspace{0.3 cm}
    \begin{itemize}
        \item $ h(r)=h_1(r)$ on $S^1 \times \left(\partial K \times [1, 1+r_1]\right)$ for some strictly convex and increasing function $h_1$,\vspace{0.2 cm}
        \item $h(r) = s_Hr - i_H$ on $S^1 \times (\partial K \times [1+r_1, 1+2r_1])$ where the slope $s_H$ of $H$ satisfies that $s_H \notin \text{Spec}(\partial K, \alpha)$ and $i_H \in \RR$. Here, $\text{Spec}(\partial K, \alpha)$ denotes the set of all periods of contractible Reeb orbits of $(\partial K, \alpha)$, and\vspace{0.2 cm}
        \item $h(r) = h_2(r)$ on $S^1 \times \left(\partial K \times [1+2r_1,1+3r_1]\right)$ for some strictly concave and increasing function $h_2$.\vspace{0.2 cm}
    \end{itemize}
        \item $H= c_H$ for some constant $c_H >0$ on $S^1 \times \left(M - (K \cup \left(\partial K \times [1,1+3r_1]\right)\right))$.\vspace{0.2 cm}
\end{enumerate}
 We denote the set of all contact type $K$-admissible Hamiltonian functions by $\mathcal{H}^{\text{Cont}}_K$. 

\vspace{0.2 cm}

 Since $K$ has a contact type boundary, we can choose a \textbf{cylindrical} almost complex structure $J$ around $\partial K$ on $M$, which is compatible with $\omega$ and satisfies
 \begin{align*}
     dr \circ J = -r \alpha.
 \end{align*}
With all the Floer data above, we can see that the definition \eqref{reldef} can be rewritten as
 \begin{align}\label{reldef2}
      SH^*_M(K) = H^* \left( \widehat{\varinjlim_{H \in \mathcal{H}^{\text{Cont}}_K}} CF(H) \right)
 \end{align}
where $CF(H) = CF(\widetilde{H})$ and $\widetilde{H}$ is a small nondegenerate perturbation of $H$ with $s_H = s_{\widetilde{H}}$. 

\vspace{0.2 cm}
 
Let us introduce one more terminology which is convenient for us. We say that a Hamiltonian function $H : S^1 \times M \to \RR$ is \textbf{contact type $K$-semi-admissible} if it satisfies $(A1-A3)$ but $(A1)$ is replaced by\vspace{0.2 cm}
 \begin{enumerate}[label=(A1$^{'}$)]
     \item $H = 0$ on $S^1 \times K$.\vspace{0.2 cm}
 \end{enumerate}
Note that $K$-admissible Hamiltonian function is not $K$-semi-admissible. The slope $s_H$, $y$-intercept $i_H$ and the constant $c_H$ of $K$-semi-admissible Hamiltonian function $H$ satisfy the following relations. We can show that 
\begin{align*}
    s_H \leq i_H
\end{align*}
because
\begin{align*}
    -i_H &= -(1+r_1)h_1'(1+r_1) + h_1(1+r_1)&&y\textrm{-intercept}\\
    &=-(1+r_1)s_H + \int_{1}^{1+r_1} h_1'(r) dr &&h(1)=0\\
    &\leq -(1+r_1)s_H + \int_{1}^{1+r_1} s_H\, dr&&h_1\,\,\textrm{is convex on}\,\,[1,1+r_1]\\
    &=-s_H.
\end{align*}
Also, we have an estimate of $c_H$ in terms of $s_H$:
\begin{align}\label{chsh}
    c_H \leq 3r_1 s_H.
\end{align}
Indeed, 
\begin{align*}
    c_H &= h(1+3r_1) - h(1) \\
    &=\int_{1}^{1+3r_1} h'(r) dr \\
    &=\int_{1}^{1+r_1} h_1'(r)dr+ \int_{1+r_1}^{1+2r_1}s_H\,dr+ \int_{1+2r_1}^{1+3r_1}h_2'(r)dr \\
    &\leq3r_1 s_H
\end{align*}
where the last inequality follows from the fact that $h_1'(r) \leq s_H$ and $h_2'(r) \leq s_H$.

\vspace{0.2cm}

For any $K$(-semi)-admissible Hamiltonian function $H$, there are 4 types of Hamiltonian orbits (after perturbation): 
\begin{itemize}
    \item critical points contained in $K$,\vspace{0.2 cm}
    \item nonconstant Hamiltonian orbits coming from $h_1$,\vspace{0.2 cm}
    \item nonconstant Hamiltonian orbits coming from $h_2$, and\vspace{0.2 cm}
    \item critical points lying outside $K$.\vspace{0.2 cm}
\end{itemize}
We refer to the first two types orbits as \textit{lower orbits}, and to the latter two types of orbits as \textit{upper orbits}.


 \vspace{0.2cm}
 
The definition \eqref{reldef2} can be further rephrased in terms of $K$-semi-admissible Hamiltonian functions by the following theorem.
\begin{proposition}\label{re1}
    Let $(M,\omega)$ be a closed symplectic manifold and $K \subset M$ be a compact domain with contact type boundary. For any contact type $K$-semi-admissible Hamiltonian function $H$, we have
    \begin{align*}
        SH_M(K) = H \left( \widehat{\varinjlim_{\sigma \to \infty}}CF(\sigma H)  \right)
    \end{align*}
\end{proposition}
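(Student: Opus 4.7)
The plan is a cofinality argument for the directed colimit defining $SH_M(K)$, identifying a suitably perturbed version of the family $\{\sigma H\}_{\sigma > 0}$ as a cofinal subfamily of $\mathcal{H}^{\text{Cont}}_K$.

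First I would explain how to interpret $CF(\sigma H)$: since $\sigma H$ vanishes identically on $S^1 \times K$ and is therefore degenerate there, the definition is implicitly made with a small nondegenerate perturbation $\widetilde{\sigma H}$ with the same slope $\sigma s_H$ and the same radial structure in the collar region. A natural choice is $\widetilde{\sigma H} = \sigma H - \epsilon_\sigma f$, where $f$ is a fixed Morse function on $K$ which is strictly positive in the interior and vanishes to first order on $\partial K$, and $\epsilon_\sigma > 0$ is small. Then $\widetilde{\sigma H}$ is strictly negative on $S^1 \times K$, so it lies in $\mathcal{H}^{\text{Cont}}_K$, and the collection $\{\widetilde{\sigma H}\}_\sigma$ forms a direct system via monotone continuation maps as $\sigma$ grows.

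Next I would verify cofinality. Given any $H' \in \mathcal{H}^{\text{Cont}}_K$ with slope $s_{H'}$ and outer constant $c_{H'}$, I would show that for $\sigma$ large enough and $\epsilon_\sigma$ small enough, the pointwise inequality $H' \leq \widetilde{\sigma H}$ holds on $S^1 \times M$. Outside $K$ together with its collar, this reduces to $c_{H'} < \sigma c_H$, which holds for $\sigma > c_{H'}/c_H$. In the linear portion of the collar, comparing slopes gives $\sigma s_H > s_{H'}$ once $\sigma$ is large, and the bounded contributions from the convex/concave smoothing regions are absorbed by enlarging $\sigma$. Inside $K$, $H'$ is bounded below while $\widetilde{\sigma H}$ is $O(\epsilon_\sigma)$-close to zero, so choosing $\epsilon_\sigma < \inf_{S^1 \times K} |H'|$ secures the inequality. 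Pointwise domination then produces a monotone continuation map $CF(H') \lr CF(\widetilde{\sigma H})$ weighted by nonnegative powers of $T$, giving a morphism defined over $\Lambda_{\geq 0}$. This establishes cofinality of $\{\widetilde{\sigma H}\}_\sigma$ inside $\mathcal{H}^{\text{Cont}}_K$ under the ordering by pointwise inequality.

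By the standard cofinality principle for filtered colimits, the induced comparison map between $\varinjlim_\sigma CF(\widetilde{\sigma H})$ and $\varinjlim_{H' \in \mathcal{H}^{\text{Cont}}_K} CF(H')$ is an isomorphism of filtered chain complexes of $\Lambda_{\geq 0}$-modules, and applying the $\Lambda_{\geq 0}$-adic completion followed by cohomology yields the claimed identification using \eqref{reldef2}. The main obstacle I expect is that $\Lambda_{\geq 0}$-adic completion is neither exact nor a priori compatible with filtered colimits, so the cofinality isomorphism must be promoted from the uncompleted to the completed level; this is handled by observing that the continuation maps respect the action filtration controlling the completion's topology, so the isomorphism descends after a diagram chase. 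A secondary technical point is to choose the perturbations $\widetilde{\sigma H}$ (and Floer data) coherently across $\sigma$ so that the continuation maps between them compose on the nose, which is the standard choice of acceleration data in Varolgunes' framework.
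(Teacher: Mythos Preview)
Your cofinality strategy is correct and matches the paper's approach in spirit, but the paper executes it more cleanly. Instead of perturbing by a Morse function, the paper simply subtracts a \emph{constant}: set $H_i = \sigma_i H - \epsilon_i$ with $\sigma_i \nearrow \infty$ and $\epsilon_i \searrow 0$. This already makes $H_i$ negative on $K$ (hence contact type $K$-admissible after the same implicit nondegenerate perturbation used to define $CF$ throughout), and cofinality is immediate. The payoff is that a constant shift leaves the Floer complex literally unchanged, $CF(H_i) = CF(\sigma_i H)$, so one gets an \emph{equality} of direct systems and hence of completed colimits---no comparison argument needed.

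Your anticipated ``main obstacle'' about completion is a red herring. Cofinality already gives an isomorphism of the uncompleted filtered colimits as $\Lambda_{\geq 0}$-chain complexes; completion is a functor, so it sends this isomorphism to an isomorphism of the completed complexes. The failure of completion to commute with colimits or to be exact is irrelevant here, since you are not trying to interchange operations---you are just applying a functor to an existing isomorphism. So the diagram chase you describe is unnecessary, and the coherence issues for Floer data across $\sigma$ are the standard ones already absorbed into the definition of $CF(H)$ via perturbation.
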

\begin{proof}
Let $\{\sigma_i\}$ be an increasing sequence of positive numbers and $\{\epsilon_i\}$ be a decreasing sequence of positive numbers such that
\begin{align*}
    \lim_{i \to \infty} \sigma_i = \infty\,\,\textrm{and}\,\,\lim_{i \to \infty} \epsilon_i = 0.
\end{align*}
For each $i=1,2,3,\cdots$, define a function $H_i$ by $$H_i = s_iH - \epsilon_i.$$ Then each $H_i$ is a contact type $K$-admissible Hamiltonian function and the sequence $\{H_i\}_{i=1,2,\cdots}$ is a cofinal sequence of $\mathcal{H}_K^{\text{Cont}}$, that is, 
\begin{align*}
H_1\leq H_2\leq H_3\leq \cdots\,\,\textrm{and}\,\,
    \lim_{i \to \infty}H_i(t,p) = \begin{cases}
        0&\textrm{if}\,\,p \in K\\
        \infty &\textrm{if}\,\,p \notin K.
    \end{cases}
\end{align*}
Since $H_i$ and $s_iH$ differ by a constant $\epsilon_i$, we have
\begin{align*}
    CF(H_i) = CF(\sigma_iH)    
\end{align*}
as cochain complexes and therefore,
\begin{align}\label{semii}
    \widehat{\varinjlim_{i \to \infty}} CF(H_i) = \widehat{\varinjlim_{i \to \infty}} CF(\sigma_iH)
\end{align}
also holds as an equality of complexes. Then
\begin{align*}
    SH_M(K) &= H \left( \widehat{\varinjlim_{i \to \infty}} CF(H_i) \right)&&\textrm{Definition}\\& = H \left( \widehat{\varinjlim_{i \to \infty}} CF(\sigma_iH) \right)&&\textrm{by}\,\,\eqref{semii}.
\end{align*}
We can conclude the proof because $\{\sigma_i\}$ can be chosen any monotone increasing sequence with $\displaystyle\lim_{i \to \infty}\sigma_i = \infty$. \\    
\end{proof}
\vspace{0.2 cm}
\subsection{Filtered relative symplectic cohomology}
For the relative symplectic cohomology, we also have the action filtration, which can be defined in a natural manner. More precisely, for each $\tau \in \RR$,
\begin{align}\label{actrel}
    SH_M^{>-\tau}(K) = H \left( \widehat{\varinjlim_{H \in \mathcal{H}_K}} CF^{>-\tau}(H)  \right).
\end{align}
Obviously, if $K$ has contact type boundary, we may rewrite \eqref{actrel} as
\begin{align*}
    SH_M^{>-\tau}(K) = H \left( \widehat{\varinjlim_{H \in \mathcal{H}^{\text{Cont}}_K}} CF^{>-\tau}(H)  \right).
\end{align*}

\vspace{0.2 cm}

In the rest of this subsection, $(M, \omega)$ will be assumed to satisfy that $\omega|_{\pi_2(M)} = 0$ and hence the action, given by \eqref{act}, depends only on Hamiltonian orbits but not on their disk cappings. Let $K \subset M$ be a compact domain with contact type boundary. If a $K$(-semi)-admissible Hamiltonian function $H$ is given by $h(r)$ on $S^1 \times( {\partial K} \times[1, 1+3r_1])$, then the Hamiltonian vector field is given by
\begin{align*}
  X_H(p, r) = -h'(r) R_\alpha(p)  
\end{align*}
on $\partial K \times [1,1+3r_1]$ where $R_\alpha$ is the Reeb vector field of $(\partial K, \alpha)$. Hence, every $T$-periodic Reeb orbit $\gamma$ of $(\partial K, \alpha)$ corresponds to a 1-periodic Hamiltonian orbit $x = (\gamma, r_*)$ of $H$ with $h'(r_*) = T$ but it travels in the opposite direction. Associated with this correspondence are two Conley–Zehnder indices: one computed for $x$ as a Hamiltonian orbit and the other computed for $\gamma$ as a Reeb orbit. Denoting the former one by $\mu_{\textrm{CZ}}^{\textrm{Ham}}$ and the latter one by $\mu_{\textrm{CZ}}^{\textrm{Reeb}}$, they are related by 
\begin{align}\label{indexcomp}
    \mu_{\textrm{CZ}}^{\textrm{Ham}}(x) = n - \mu_{\textrm{CZ}}^{\textrm{Reeb}}(\gamma)
\end{align}
where $n = \frac{1}{2} \dim M$.
\vspace{0.2 cm}

If $K$ is a Liouville domain and a $K$(-semi)-admissible Hamiltonian function $H$ is given by $h(r)$ on $S^1 \times\left( \partial K \times [1,1+3r_1] \right) $, then the action of a nonconstant Hamiltonian orbit $x=(\gamma,r_*)$ of $H$ can be computed as
\begin{align}\label{for}
    \mathcal{A}_H(x) = - r_*h'(r_* ) + h (r_*). 
\end{align}
Note that \eqref{for} depends only or $r_*$. Using this observation, we can see that the actions of lower orbits of $K$-semi-admissible Hamiltonian functions are negative. Indeed, for a lower orbit $x=(\gamma,r_*)$ for $r_* \in (1,1+r_1)$ of a $K$-semi-admissible Hamiltonian function $H$, we have
\begin{align}\label{lo}
    h(r_*) \leq (r_*-1)h'(r_*)
\end{align}
because
\begin{align*}
    h(r_*)& = h(r_*) - h(1) &&h(1) = 0\\
    &= \int_{1}^{r_*} h'(r)d r\\
    &\leq \int_{1}^{r_*} h'(r_*)d r &&h\,\,\textrm{is convex on }\,\,(1,1+r_1) \\
    &= (r_*-1)h'(r_*). 
\end{align*}
It follows that the action of $x$ is negative:
\begin{align*}
    \mathcal{A}_H(x) &= - r_*h'(r_* ) + h (r_*) \leq - r_*h'(r_* ) + (r_*-1)h'(r_*) = -h'(r_*) <0
\end{align*}
where the last inequality follows from the fact that $h$ is increasing. We can obtain a similar estimate for the actions of upper orbits. Let $x=(\gamma,r_*)$ be an upper orbit of $H$, so that $r_* \in (1+2r_1,1+3r_1)$ and $h'(r_*)$ is the period of $\gamma$. Note that  
\begin{align*}
    c_H -h(r_*)& = h(1+3r_1) - h(r_*)&&c_H=h(1+3r_1)\\
    &=\int_{r_*}^{1+3r_1} h'(r) dr \\
    &\leq \int_{r_*}^{1+3r_1} h'(r_*) dr &&h\,\,\textrm{is concave on}\,\,(1+2r_1, 1+3r_1)\\
    & \leq (1+3r_1 - r_*)h'(r_*)
\end{align*}
Using this estimate, we obtain
\begin{align}\label{ua}
    \mathcal{A}_H(x) \geq c_H-(1+3r_1) \mathcal{A}(\gamma).
\end{align}
where $\mathcal{A}(\gamma)$ denotes the period of $\gamma$. Indeed, 
\begin{align*}
    \mathcal{A}_H(x) &= - r_*h'(r_* ) + h (r_*)\nonumber \\&\geq - r_*h'(r_* ) -(1+3r_1 - r_*)h'(r_*) +c_H\nonumber\\& = c_H-(1+3r_1)h'(r_*) \nonumber\\&=c_H-(1+3r_1) \mathcal{A}(\gamma). 
    \end{align*}

\vspace{0.2 cm}

Define a function $A_h : [1, 1+r_1] \cup[1+2r_1, 1+3r_1]  \to \RR$ by
\begin{align*}
    A_h(r) =
    \begin{cases}
        A_{h_1}(r) = -r h_1'(r) + h_1(r)&\textrm{if}\,\,r \in [1,1+r_1]\\
        A_{h_2}(r) = -r h_2'(r) + h_2(r)&\textrm{if}\,\,r \in [1+2r_1,1+3r_1].
    \end{cases}
    \end{align*}
    Then for any nonconstant orbit $x=(\gamma,r_*)$, the action \eqref{for} can be expressed as
   \begin{align*}
       \mathcal{A}_H(x) = A_h(r_*).
   \end{align*}
   Observe that 
   \begin{align}\label{mi}
       \frac{d}{dr} A_h (r)=  -h'(r) -r h''(r) +h'(r) = -r h''(r).
   \end{align}
    On $[1, 1+r_1]$, the function $A_h$ is decreasing by the convexity of $h_1$ and hence
\begin{align*}
    \max_{r \in [1, 1+r_1]} A_h(r) = A_{h_1}(1) = 0 \,\,\text{and}\,\, \min_{r \in [1, 1+r_1]}A_h(r) = A_{h_1}\left(1+r_1\right) = -i_H.
\end{align*}
Similarly, the function $A_h$ is increasing on $[1+2r_1,1+3r_1]$ by the concavity of $h_2$ and 
\begin{align*}
    \max_{r \in [1+2r_1,1+3r_1]} A_h(r) = A_{h_2}(1+3r_1) = c_H  \,\,\text{and}\,\,\min_{r \in [1+2r_1,1+3r_1]} A_h(r) = A_{h_2}\left(1+2r_1 \right) = -i_H.
\end{align*}

\vspace{0.2cm}

We introduce another variant of the action functional, which expresses the action in terms of periods of the Reeb orbit of $(\partial K, \alpha)$. Define $a_{H} : [0, 2 s_H] \to \RR$ by
\begin{align*}
    a_H (\tau) = \begin{cases}
         a_{h_1}(\tau)=\left(A_{h_1} \circ (h_1')^{-1}\right)(\tau) &\textrm{if}\,\,0\leq \tau \leq s_H\\
         a_{h_2}(\tau)=\left(A_{h_2} \circ (h_2')^{-1}\right)(\tau- s_H) &\textrm{if}\,\,s_H < \tau  \leq 2 s_H
    \end{cases}
\end{align*}
We first compute the derivative of $a_{h_1}$. By the chain rule, we have
\begin{align*}
    \frac{d}{d\tau}a_{h_1}(\tau)& = \frac{d}{d\tau}A_{h_1}\left((h_1')^{-1}(\tau)\right) \\&= \frac{d A_{h_1}}{dr}\left((h_1')^{-1}(\tau)\right) \frac{d (h_1')^{-1}}{d\tau}(\tau)
    \\&=-(h_1')^{-1}(\tau) \,h_1''\left((h_1')^{-1}(\tau)\right) \,\frac{d (h_1')^{-1}}{d\tau}(\tau) &&\textrm{by \eqref{mi}}.
    \end{align*}
    Since $h_1'\left((h_1')^{-1}(\tau)\right) = \tau$, we obtain
    \begin{align*}
        h_1''\left((h_1')^{-1}(\tau)\right) \,\frac{d (h_1')^{-1}}{d\tau}(\tau) = 1
    \end{align*}
    by the chain rule again and therefore
    \begin{align*}
        \frac{d}{d\tau}a_{h_1}(\tau) = -(h_1')^{-1}(\tau) .
    \end{align*}
    Consequently, we have the following bound for the derivative of $a_{h_1}$:
    \begin{align}\label{bound}
       -1-r_1 \leq \frac{d}{d\tau}a_{h_1}(\tau) \leq -1.
    \end{align}
    Moreover, the image of $a_{h_1}$ is $[-i_H,0]$. A completely analogous computation for $a_{h_2}$ yields 
\begin{align*}
    -1-3r_1 \leq \frac{d}{d\tau}a_{h_2}(\tau) \leq - 1-2r_1
\end{align*}
and the image of $a_{h_2}$ is $[-i_H,c_H]$

\vspace{0.2 cm}

The following proposition is the version of Proposition \ref{re1} adapted to the action filtrations.
\begin{proposition}\label{prop1}
    Let $H$ be a $K$-semi-admissible Hamiltonian. Then for $\tau\in\RR\cup\{\infty\}$
    \begin{align*} 
        SH^{> -\tau}_M(K) \cong H\left( \widehat{\varinjlim_{\sigma \to \infty }} CF^{> -\tau}(\sigma H) \right) 
    \end{align*}
\end{proposition}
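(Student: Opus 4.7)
My plan is to adapt the cofinality argument used in the proof of Proposition \ref{re1} to the filtered setting, carefully tracking how the action filtration transforms under the chain-level identification $CF(H_i) = CF(\sigma_i H)$ between a contact type $K$-admissible Hamiltonian $H_i$ and the $K$-semi-admissible $\sigma_i H$ that differ only by an additive constant.

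First, I will take the same cofinal sequence $H_i = \sigma_i H - \epsilon_i \in \mathcal{H}_K^{\text{Cont}}$ used in the proof of Proposition \ref{re1}, with $\sigma_i \nearrow \infty$ and $\epsilon_i \searrow 0^+$. Since $H_i$ and $\sigma_i H$ have the same Hamiltonian vector field, they share the same $1$-periodic orbits, and on each such orbit $x$ the action functionals satisfy $\mathcal{A}_{H_i}(x) = \mathcal{A}_{\sigma_i H}(x) - \epsilon_i$. This gives the natural identification $CF^{>-\tau}(H_i) = CF^{>-\tau + \epsilon_i}(\sigma_i H)$. To remove the shift, I will exploit the flexibility in the choice of $\{\epsilon_i\}$: the action spectrum of the nondegenerate perturbation $\widetilde{\sigma_i H}$ is a discrete subset of $\RR$, so for each $i$ I can take $\epsilon_i$ small enough that the half-open interval $(-\tau, -\tau + \epsilon_i]$ contains no spectral value. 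With this choice, $CF^{>-\tau + \epsilon_i}(\sigma_i H) = CF^{>-\tau}(\sigma_i H)$, and hence $CF^{>-\tau}(H_i) = CF^{>-\tau}(\sigma_i H)$ as cochain complexes.

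Next I will assemble these identifications into an isomorphism of directed systems. The Floer differentials match exactly, since the constant shift leaves $E_{\text{top}}$ unchanged for a single Hamiltonian. The continuation map $CF(H_i) \to CF(H_{i+1})$, however, coincides with the continuation $CF(\sigma_i H) \to CF(\sigma_{i+1} H)$ only up to multiplication by the Novikov factor $T^{\epsilon_i - \epsilon_{i+1}}$, reflecting the shift of $E_{\text{top}} = \mathcal{A}_{H_{i+1}}(y) - \mathcal{A}_{H_i}(x)$ under the constants. This factor can be absorbed by a levelwise rescaling of generators (replacing each generator of $CF(H_i)$ by an appropriate $T$-power of itself), which upgrades the fiberwise identifications into a strict morphism of directed systems of filtered complexes. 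Passing to completed colimits and then to cohomology yields
\begin{align*}
SH^{>-\tau}_M(K) = H\bigl(\widehat{\varinjlim_i}\, CF^{>-\tau}(H_i)\bigr) \cong H\bigl(\widehat{\varinjlim_{\sigma \to \infty}}\, CF^{>-\tau}(\sigma H)\bigr),
\end{align*}
and the case $\tau = \infty$ reduces immediately to Proposition \ref{re1}.

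I expect the main technical hurdle to be the last step: the bookkeeping of the Novikov factors and the verification that the levelwise rescaling descends to an isomorphism after completion over $\Lambda_{\geq 0}$, not merely after tensoring with the full Novikov field $\Lambda$. This should follow cleanly because the accumulated rescaling factor $T^{\epsilon_1 - \epsilon_i}$ stays in $\Lambda_{\geq 0}$ with bounded total valuation $\epsilon_1$, which is compatible with the inverse limit defining the completion.
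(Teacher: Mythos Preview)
Your proposal is correct and follows essentially the same route as the paper's proof: the same cofinal sequence $H_i=\sigma_iH-\epsilon_i$, the same action-shift identity $\mathcal{A}_{H_i}=\mathcal{A}_{\sigma_iH}-\epsilon_i$, and the same reduction to $CF^{>-\tau}(H_i)=CF^{>-\tau}(\sigma_iH)$ by arranging that $(-\tau,-\tau+\epsilon_i]$ avoids the action spectrum (the paper does this via a case split on whether $\tau\in\textrm{Spec}(\partial K,\alpha)$ and adjusts $\sigma_i$ in the bad case, whereas you use discreteness of the action spectrum of each nondegenerate $\sigma_iH$ directly). The one place you go beyond the paper is your attention to the $T^{\epsilon_i-\epsilon_{i+1}}$ discrepancy in the continuation-map weights, which the paper passes over in silence; your bounded-valuation rescaling is the standard way to absorb it.
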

\begin{proof}
The proof is analogous to that of Proposition \ref{re1}. Let $\{\sigma_i\}$ be an increasing sequence of positive numbers and $\{\epsilon_i\}$ be a decreasing sequence of positive numbers such that
\begin{align*}
    \lim_{i \to \infty} \sigma_i = \infty \,\,\textrm{and}\,\,\lim_{i \to \infty} \epsilon_i = 0.
\end{align*}
Then $H_i = \sigma_iH - \epsilon_i$ is a contact type $K$-admissible Hamiltonian function for each $i$ and $\{H_i\}_{i=1,2,\cdots}$ is a cofinal sequence of $\mathcal{H}_K^{\text{Cont}}$, and thus
\begin{align*}
    SH^{> -\tau}_M(K) = H \left( \widehat{\varinjlim_{i \to \infty}}CF^{-\tau}(H_i) \right).
\end{align*}
Since $H_i$ and $\sigma_i H$ differ by a constant $\epsilon_i$, their Floer complexes are identical. For each generator $x \in CF(H_i) = CF(\sigma_iH)$, the action functionals satisfy
\begin{align*}
    \mathcal{A}_{H_i}(x) = \mathcal{A}_{\sigma_iH}(x) - \epsilon_i.
\end{align*}
As a result, the action filtrations differ by a shift, and we obtain an equality of filtered chain complexes
\begin{align*}
CF^{>-\tau}(H_i)
= CF^{>-\tau+\epsilon_i}(\sigma_i H).
\end{align*}
If $\tau \notin \textrm{Spec}(\partial K, \alpha) $, then we may assume that $\tau-\epsilon_i \notin \textrm{Spec}(\partial K, \alpha)$ because $\displaystyle\lim_{i \to \infty} \epsilon_i = 0$. Moreover, we obtain
\begin{align*}
     CF^{>-\tau}(\sigma_iH) = CF^{>-\tau +\epsilon_i}(\sigma_iH).
\end{align*}
In the case where $\tau \in \textrm{Spec}(\partial K, \alpha) $, choose a sequence $\{\sigma_i\}$ such that $\tau$ does not coincide with the action of any 1-periodic orbit of $\sigma_iH$. Assuming $\epsilon_i>0$ is sufficiently small for each $i=1,2,3,\cdots$, we have
\begin{align*}
    CF^{>-\tau}(\sigma_iH) = CF^{>-\tau+\epsilon_i}(\sigma_iH).
\end{align*}\

\vspace{0.2cm}

In either case, we have
\begin{align*}
    CF^{>-\tau}(H_i) = CF^{>-\tau}(\sigma_iH).
\end{align*}
Therefore,
\begin{align*}
    \widehat{\varinjlim_{i \to \infty}}CF^{>-\tau}(H_i) = \widehat{\varinjlim_{i \to \infty}}  CF^{>-\tau}(\sigma_iH)
\end{align*}
as chain complexes. This completes the proof.\\
\end{proof}

To proceed further, we assume that $c_1(TM)|_{\pi_2(M)} = 0$ where $c_1(TM)$ is the first Chern class of the tangent bundle $TM$ of $M$. Then every Hamiltonian orbit has integer-valued Conley-Zehnder index so we can impose the \textit{index-boundedness} assumption on $\partial K$. For a compact domain $K \subset M$ with a contact type boundary, we say that $(\partial K, \alpha)$ is \textbf{index-bounded} if for each $k\in\ZZ$, the set of periods of Reeb orbits on $(\partial K, \alpha)$ that are contractible in $M$ and have Conley-Zehnder index $k$ of $(\partial K, \alpha)$ is bounded.

\vspace{0.2 cm}

 In the rest of this paper, we assume that $(M,\omega)$ is a closed $2n$-dimensional symplectic manifold which is symplectically aspherical, that is, $\omega|_{\pi_2(M)}=0$ and $c_1(TM)|_{\pi_2(M)}=0$, $K \subset M$ is a Liouville domain and $(\partial K, \alpha)$ is index-bounded.

\vspace{0.2cm}

Under these assumptions, the relative symplectic cohomology $SH_M(K)$ can be written in terms of lower orbits. We have seen that those lower orbits have negative actions. Moreover, if $x$ is an upper orbit corresponding to a Reeb orbit $\gamma$, then we have seen from \eqref{ua} that the action of $x$ satisfies 
\begin{align*}
    \mathcal{A}_H(x) \geq c_H - (1+3r_1) \mathcal{A}(\gamma).
\end{align*}
Since $\partial K$ is index-bounded, for each $k\in\ZZ$, there exist $b(k)>0$ such that $0< \mathcal{A}(\gamma) \leq b(k)$ for any Reeb orbit $\gamma$ on $(\partial K, \alpha)$ of Conley-Zehnder index $k$. By \eqref{indexcomp}, if a Hamiltonian orbit $x$ has Conley-Zehnder index $k$, then we have
\begin{align*}
    \mathcal{A}_H(x) \geq c_H - (1+3r_1) b(n-k).
\end{align*}

\vspace{0.2cm}

Let $\{H_i\}$ be a cofinal sequence of $\mathcal{H}_K^{\text{Cont}}$, that is, $H_1 \leq H_2 \leq H_3\leq \cdots$ and 
\begin{align*}
    \lim_{i \to \infty} H_i(t,p) = \begin{cases}
 0&\textrm{if}\,\,p \in K\\
 \infty &\textrm{if}\,\,p \in M \setminus K.
    \end{cases}
\end{align*}
Since $\displaystyle \lim_{i \to \infty} c_{H_i} = \infty$, we may assume that $c_{H_i}$ is sufficiently large so that $$c_{H_i} - (1+3r_1)b(n-k) >0$$ for fixed $k$. Denote the set of upper orbits of $H_i$ with Conley-Zehnder index $k$ by $CF^k_{U}(H_i)$.
Then $CF^k_{U}(H_i)$ is a subcomplex of $CF^k(H_i)$ because the Floer differential increases the action and every lower orbit has negative action. Define
\begin{align*}
    CF^k_{L}(H_i) = CF^k(H_i)/CF^k_{U}(H_i).
\end{align*}
Without loss of generality, we can choose $c_{H_i}>0$ so that
\begin{align*}
    c_{H_i} - (1+3r_1)b\left(n-(k-1)\right) >0\,\,\textrm{and}\,\,c_{H_i} - (1+3r_1)b(n-(k+1)) >0
\end{align*}
and hence $CF^j_{U}(H_i)$ and $CF^j_{L}(H_i)$ make sense for $j = k-1,k,k+1$.

\vspace{0.2 cm}

For any $\ZZ$-graded cochain complex $(C,d)$ and $k\in \ZZ$, define a cochain complex $(C^{\sim k},d)$ by
\begin{align*}
    \left(C^{\sim k}\right)^i = \begin{cases}
        C^k&\textrm{for}\,\,i=k-1,k,k+1\\
        0&\textrm{otherwise}.
    \end{cases}
\end{align*}
Note that $k$-th cohomology of $(C^{\sim k},d)$ is $H^k(C,d)$, that is,
\begin{align}\label{kth}
 H^k(C,d) = H^k(C^{\sim k},d).   
\end{align}
 
\begin{proposition}\label{lower}
 For each $k\in\ZZ$, let $\{H_i\}$ be a cofinal sequence of $\mathcal{H}_K^{\text{Cont}}$ described above. Then, for any $\tau \in \RR$, we have an isomorphism
    \begin{align*}
        SH^{>-\tau,k}_M(K) \cong H^k \left( \varinjlim_{i \to \infty} CF^{>-\tau}_{L}(H_i) \right).
    \end{align*}
        
\end{proposition}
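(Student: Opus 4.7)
The plan is to exploit the natural short exact sequence of cochain complexes $0 \to CF_U \to CF \to CF_L \to 0$ at each level $i$ of the chosen cofinal sequence, and to argue that after passing to the completed direct limit the upper part disappears while the lower part is essentially unaffected by completion. The key dynamical input is that along the cofinal sequence $\{H_i\}$ described before the proposition, the constants $c_{H_i}$ tend to infinity; by the estimate \eqref{ua} together with index-boundedness, the actions of the finitely many upper orbits of degrees $k-1,k,k+1$ grow without bound, making them ``invisible'' to the completion.

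First I would invoke Proposition \ref{prop1} to rewrite
\begin{align*}
SH^{>-\tau,k}_M(K) \,=\, H^k\!\left(\widehat{\varinjlim_{i\to\infty}} CF^{>-\tau}(H_i)^{\sim k}\right).
\end{align*}
Because $c_{H_i}$ is chosen to exceed $-\tau + (1+3r_1)\max\{b(n-k-1),b(n-k),b(n-k+1)\}$, every upper orbit of degrees $k-1,k,k+1$ has action $>-\tau$, so $CF_U(H_i)^{\sim k}\subset CF^{>-\tau}(H_i)^{\sim k}$ and we obtain the short exact sequence
\begin{align*}
0 \lr CF_U(H_i)^{\sim k} \lr CF^{>-\tau}(H_i)^{\sim k} \lr CF^{>-\tau}_L(H_i)^{\sim k} \lr 0.
\end{align*}
Continuation maps $c^{H_i,H_j}$ preserve this decomposition for $i\le j$ large, since they strictly increase action while lower orbits of $H_j$ have nonpositive action, forcing the image of an upper orbit to remain upper. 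Taking the direct limit (an exact functor) yields the analogous sequence, and the heart of the argument is to prove
\begin{align*}
\widehat{\varinjlim_{i\to\infty}}\, CF_U(H_i)^{\sim k} \,=\, 0.
\end{align*}
Granted this, the long exact sequence in cohomology identifies the completed middle and right terms; on the right, for $i$ large the set of lower orbits of action $>-\tau$ and degrees $k-1,k,k+1$ stabilizes to a finite collection (governed by the contractible Reeb orbits of bounded period and of Conley–Zehnder index $n-k,n-k\pm 1$), so the complexes become finitely generated and the completion drops out. Applying \eqref{kth} then produces the stated isomorphism.

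The main obstacle is establishing the vanishing of the completed direct limit of $CF_U(H_i)^{\sim k}$. My plan is to analyze $c^{H_i,H_j}$ on upper generators: for $i\le j$ in the stable range, an upper orbit $x_i$ of $H_i$ corresponds to a fixed contractible Reeb orbit $\gamma$ of bounded period (by index-boundedness), and the action computation \eqref{ua} together with the action formula \eqref{for} shows that the corresponding upper orbit $x_j$ of $H_j$ has action exceeding that of $x_i$ by approximately $c_{H_j}-c_{H_i}$, up to a constant depending only on $r_1$ and the bounded period of $\gamma$. Consequently every term $T^{E_{\textrm{top}}(u)}\,y$ appearing in $c^{H_i,H_j}(x_i)$ is divisible by $T^{N_{ij}}$ with $N_{ij}\to\infty$ as $j\to\infty$. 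Hence every class in $\varinjlim_i CF_U(H_i)^{\sim k}$ is divisible by arbitrarily large powers of $T$, so its image in $(\varinjlim) \ton \Lambda_{\geq 0}/\Lambda_{\geq r}$ vanishes for every $r>0$, and the inverse limit defining the completion is zero. This reduces the computation of $SH^{>-\tau,k}_M(K)$ to the lower orbits, completing the proof.
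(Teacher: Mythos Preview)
Your approach is essentially the same as the paper's: split $CF^{>-\tau,\sim k}(H_i)$ into upper and lower parts, pass to the completed direct limit, show the upper piece dies and the lower piece needs no completion. The paper outsources those last two facts to Lemmas~5.3 and~5.4 of \cite{dgpz}, whereas you sketch them directly; your divisibility argument for the upper vanishing is exactly the mechanism behind Lemma~5.3.

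Two places in your sketch should be tightened to match the paper's level of rigor. First, you invoke ``the long exact sequence in cohomology'' after completion without justifying that the short exact sequence survives completion; the paper handles this by observing that $\varinjlim CF_L^{>-\tau,\sim k}(H_i)$ is flat over $\Lambda_{\geq 0}$ (so tensoring with $\Lambda_{\geq 0}/\Lambda_{\geq r}$ stays exact) and then applying Mittag--Leffler for the inverse limit. Your divisibility argument in fact shows the stronger statement that the image of $\varinjlim CF_U$ already vanishes in $(\varinjlim CF)\ton\Lambda_{\geq 0}/\Lambda_{\geq r}$, which lets you bypass this, but you should say so explicitly. Second, your claim that ``the complexes become finitely generated and the completion drops out'' concerns each individual $CF_L^{>-\tau,\sim k}(H_i)$, not their direct limit; a direct limit of finitely generated free $\Lambda_{\geq 0}$-modules need not be finitely generated or $T$-adically complete (think of $\Lambda_{\geq 0}\xrightarrow{\cdot T}\Lambda_{\geq 0}\xrightarrow{\cdot T}\cdots$), so some additional argument is required---this is precisely what Lemma~5.4 of \cite{dgpz} supplies.
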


\begin{proof}

\vspace{0.2 cm}

    Let us consider the following short exact sequence:
    \begin{align*}
        0 \lr CF^{>-\tau,\sim k}_{U}(H_i) &\lr CF^{>-\tau,\sim k}(H_i)\\& \lr CF^{>-\tau,\sim k}(H_i)/CF^{>-\tau,\sim k}_{U}(H_i) = CF^{>-\tau,\sim k}_{L}(H_i) \lr 0.
    \end{align*}
Since the direct limit is an exact functor, we still have the exact sequence as follows:
\begin{align*}
    0 \lr \varinjlim_{i \to \infty}CF^{>-\tau,\sim k}_{U}(H_i) \lr \varinjlim_{i \to \infty}CF^{>-\tau,\sim k}(H_i) \lr \varinjlim_{i \to \infty}CF^{>-\tau,\sim k}_{L}(H_i) \lr 0.
\end{align*}
Note that each $CF^{>-\tau,\sim k}_{L}(H_i)$ is a free module over $\Lambda_{\geq 0}$ generated by lower orbits of $H_i$ with action greater than $-\tau$ of Conley-Zehnder index $k-1,k$ and $k+1$ and hence it is a flat module over $\Lambda_{\geq0}$. Since the direct limit of flat modules is known to be flat, we have
\begin{align*}
    \text{Tor}_1^{\Lambda_{\geq 0}}\left(\varinjlim_{i \to \infty}CF^{>-\tau,\sim k}_{L}(H_i), \Lambda_{\geq 0}/ \Lambda_{\geq r}\right) = 0
\end{align*}
for each $r \geq 0$ and therefore,
\begin{align*}
    0 \lr  \varinjlim_{i \to \infty}CF^{>-\tau,\sim k}_{U}(H_i) \otimes \Lambda_{\geq 0}/\Lambda_{\geq r} & \lr \varinjlim_{i \to \infty}CF^{>-\tau,\sim k}(H_i) \otimes \Lambda_{\geq 0}/\Lambda_{\geq r}\\ & \lr \varinjlim_{i \to \infty}CF^{>-\tau,\sim k}_{L}(H_i)\otimes \Lambda_{\geq 0}/\Lambda_{\geq r} \lr 0
\end{align*}
is still an exact sequence. For $0 \leq r \leq r'$, the projection
\begin{align}\label{mlex}
    \Lambda_{\geq 0}/ \Lambda_{\geq r'} \to \Lambda_{\geq 0}/ \Lambda_{\geq r}
\end{align}
is surjective. For any $\Lambda_{\geq 0}$-module $A$, the map induced by \eqref{mlex} $$A \ton \Lambda_{\geq 0}/ \Lambda_{\geq r'} \to A \ton \Lambda_{\geq 0}/ \Lambda_{\geq r}$$ induced by \eqref{mlex} is also surjective due to the right exactness of tensor product. Then by the Mittag-Leffler theorem for the inverse limit, the following sequence
\begin{align*}
    0 \lr  \varprojlim_{r \to 0} \varinjlim_{i \to \infty}CF^{>-\tau,\sim k}_{U}(H_i) \otimes \Lambda_{\geq 0}/\Lambda_{\geq r} & \lr \varprojlim_{r \to 0}\varinjlim_{i \to \infty}CF^{>-\tau,\sim k}(H_i) \otimes \Lambda_{\geq 0}/\Lambda_{\geq r}\\ & \lr \varprojlim_{r \to 0}\varinjlim_{i \to \infty}CF^{>-\tau,\sim k}_{L}(H_i)\otimes \Lambda_{\geq 0}/\Lambda_{\geq r} \lr 0
\end{align*}
is also exact. By the definition of the completion (see Section \ref{floer}), we have the following exact sequence of complexes
\begin{align}\label{lmc}
    0 \lr  \widehat{\varinjlim_{i \to \infty}}CF^{>-\tau,\sim k}_{U}(H_i) \lr \widehat{\varinjlim_{i \to \infty}}CF^{>-\tau,\sim k}(H_i)\lr \widehat{\varinjlim_{i \to \infty}}CF^{>-\tau,\sim k}_{L}(H_i) \lr 0.
\end{align}
    Since it is proved in \cite{dgpz} that\vspace{0.2cm}
    \begin{itemize}
        \item (Lemma 5.3 of \cite{dgpz})\, $ \displaystyle\widehat{\varinjlim_{i \to \infty}}CF^{>-\tau,\sim k}_{U}(H_i) = 0$, and\vspace{0.2cm}
        
        \item (Lemma 5.4 of \cite{dgpz}) \, $\displaystyle \widehat{\varinjlim_{i \to \infty}}CF^{>-\tau,\sim k}_{L}(H_i) \cong \varinjlim_{i \to \infty}CF^{>-\tau,\sim k}_{L}(H_i)$,\\
    \end{itemize}
  the short exact sequence \eqref{lmc} reduces to the isomorphism of complexes
  \begin{align}\label{reduce}
      \widehat{\varinjlim_{i \to \infty}}CF^{>-\tau,\sim k}_{}(H_i) \cong \varinjlim_{i \to \infty}CF^{>-\tau,\sim k}_{L}(H_i).
  \end{align}
  Then
  \begin{align*}
      SH^{>-\tau,k}_M(K) &= H^k \left( \widehat{\varinjlim_{i \to \infty} }CF^{>-\tau}(H_i)  \right) &&\textrm{Definition}\\
      &= H^k \left( \widehat{\varinjlim_{i \to \infty} }CF^{>-\tau, \sim k}(H_i) \right)&&\textrm{by}\,\,\eqref{kth}\\
      &\cong H^k \left( \varinjlim_{i \to \infty} CF^{>-\tau, \sim k}_{L}(H_i) \right)&&\textrm{by}\,\,\eqref{reduce}\\
      &= H^k \left( \varinjlim_{i \to \infty} CF^{>-\tau}_{L}(H_i) \right) &&\textrm{by}\,\,\eqref{kth}.
  \end{align*}
   This completes the proof.\\ 
\end{proof}
\begin{corollary}\label{cor3}
    Let $H$ be a $K$-semi-admissible Hamiltonian function and $\tau \in \RR $. Then for each $k \in \ZZ$, we have
    \begin{align*}
        SH^{>- \tau,k}_M(K) &\cong H^k \left(  \varinjlim_{\sigma \to \infty} CF^{>- \tau}_{L}(\sigma H) \right)\\
        &\cong H^k \left( \varinjlim_{\sigma \to \infty} CF^{(-\tau,0]}(\sigma H)  \right)\\& \cong \varinjlim_{\sigma \to \infty} HF^{(-\tau,0],k}(\sigma H).
    \end{align*}
\end{corollary}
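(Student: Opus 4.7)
The plan is to obtain the three isomorphisms from three independent ingredients: Proposition \ref{lower} combined with the cofinal sequence used in Proposition \ref{prop1}, a chain-level identification coming from the action/degree dichotomy for scaled Hamiltonians, and the exactness of filtered colimits.

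For the first isomorphism, I would take the sequence $H_i = \sigma_i H - \epsilon_i$ from the proof of Proposition \ref{prop1}, with $\sigma_i \to \infty$ and $\epsilon_i \to 0$ chosen as in that proof. Since $H_i$ and $\sigma_i H$ differ only by the constant $\epsilon_i$, they share the same Floer complex, and the $\epsilon_i$-adjustment (in either the $\tau \in \textrm{Spec}(\partial K, \alpha)$ or $\tau \notin \textrm{Spec}(\partial K, \alpha)$ case) yields $CF^{>-\tau}(H_i) = CF^{>-\tau}(\sigma_i H)$, hence $CF^{>-\tau}_L(H_i) = CF^{>-\tau}_L(\sigma_i H)$. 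Applying Proposition \ref{lower} and using that $\{\sigma_i\}$ is cofinal among all $\sigma \to \infty$ then produces the first isomorphism.

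For the second isomorphism, the plan is to establish the chain-level equality
\begin{align*}
CF^{>-\tau, \sim k}_L(\sigma H) = CF^{(-\tau, 0], \sim k}(\sigma H)
\end{align*}
for all $\sigma$ sufficiently large (depending on $\tau$ and $k$), where $\sim k$ truncates to degrees $\{k-1, k, k+1\}$. The argument rests on a clean dichotomy for large $\sigma$: lower orbits of $\sigma H$ have action $\leq 0$, as recorded in the discussion preceding the corollary, while any upper orbit $x$ of Conley-Zehnder index $k' \in \{k-1, k, k+1\}$ satisfies
\begin{align*}
\mathcal{A}_{\sigma H}(x) \geq c_{\sigma H} - (1+3r_1)\, b(n-k')
\end{align*}
by \eqref{ua} together with the index-boundedness of $(\partial K, \alpha)$, and this is positive once $c_{\sigma H} = \sigma c_H$ is large enough. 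In this regime the lower orbits in degrees $\{k-1, k, k+1\}$ coincide with the orbits of action in $(-\tau,0]$ in the same degrees, so both quotient complexes have identical generators and inherit the same induced differential from $CF^{>-\tau, \sim k}(\sigma H)$. Passing to $\varinjlim_{\sigma \to \infty}$ and using $H^k(C) = H^k(C^{\sim k})$ on both sides yields the desired isomorphism.

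The third isomorphism is formal: filtered colimits of $\Lambda_{\geq 0}$-modules are exact, so cohomology commutes with $\varinjlim_{\sigma \to \infty}$, and the resulting colimit is by definition $\varinjlim_{\sigma \to \infty} HF^{(-\tau, 0], k}(\sigma H)$. I expect the second step to be the only real obstacle: one must carefully track that the ``upper $=$ action $>0$'' dichotomy holds simultaneously in each of the three neighboring degrees $\{k-1, k, k+1\}$ for a cofinal range of $\sigma$ determined by the three period bounds $b(n-k-1)$, $b(n-k)$, $b(n-k+1)$, so that both the generators and the induced differentials of the two quotient complexes truly agree at the chain level before passing to the colimit. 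Everything else reduces to bookkeeping with the previously proved propositions.
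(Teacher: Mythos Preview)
Your proposal is correct and follows essentially the same approach as the paper. The paper's proof is terser: it verifies up front that every upper orbit of $\sigma H$ of Conley--Zehnder index $k-1,k,k+1$ has positive action for large $\sigma$ (exactly your estimate $\mathcal{A}_{\sigma H}(x)\geq \sigma c_H-(1+3r_1)\max\{b(n-k\pm1),b(n-k)\}>0$), then says the first isomorphism follows verbatim from the proof of Proposition~\ref{lower}, the second from the fact that lower orbits have nonpositive action, and the third from commutation of direct limits with homology. You arrive at the same three ingredients, just placing the positive-action check in your second step rather than the first; note that this check is also what guarantees the hypothesis of Proposition~\ref{lower} for the sequence $H_i=\sigma_iH-\epsilon_i$, so it is implicitly needed in your first step as well.
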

\begin{proof}
In view of the proof of Proposition \ref{lower}, it suffices to check that every upper orbit of $\sigma H$ of Conley-Zehnder index $k-1,k$ and $k+1$ has positive action. Let $x$ be such an orbit. Then by \eqref{ua}, we have
\begin{align*}
    \mathcal{A}_{\sigma H}(x) &\geq c_{\sigma H} - (1+3r_1) \max\left\{b(n-k+1),b(n-k),b(n-k-1)\right\}.
\end{align*}
Since $c_{\sigma H} = \sigma c_H$, this yields
\begin{align*}
    \mathcal{A}_{\sigma H}(x) \geq \sigma c_{H} - (1+3r_1) \max\left\{b(n-k+1),b(n-k),b(n-k-1)\right\}>0
\end{align*}
for sufficiently large $\sigma >0$. The remainder of the argument follows verbatim from the proof of Proposition \ref{lower}, and we therefore omit the details. The second isomorphism follows from the fact that all lower orbits have negative action, while the final isomorphism uses the commutativity of taking direct limits with homology.\\
\end{proof}
Proposition \ref{lower} and Corollary \ref{cor3} allow us to set
    \begin{align*}
        SH^{> -\tau}_M(K)\,\,\textrm{if}\,\,\tau \leq 0.
    \end{align*}
    Also, if $x=(\gamma,r_*)$ is a lower orbit of $H$ and has action greater than $-\tau$, then 
    \begin{align*}
        -\tau &< -r_* h'(r_*) +h(r_*)\\
        &\leq -r_* h'(r_*) +(r_* -1) h'(r_*)&&\textrm{by \eqref{lo}}\\
        &=- \mathcal{A}(\gamma)
    \end{align*}
    and therefore $SH^{> -\tau}_M(K)$ is generated by Reeb orbits of $(\partial K, \alpha)$ with period less than $\tau$.

\vspace{0.2cm}
    
Furthermore, in situations where the action window can be effectively controlled, the passage to the direct limit becomes unnecessary, as the corresponding Floer complexes already capture the desired information.
\begin{theorem}\label{hamiso}
For each integer $k \in \ZZ$, there exists a contact type $K$-semi-admissible Hamiltonian function $H$ such that 
    \begin{align*}
        SH^{>-\tau,k}_M(K) \cong HF^{(a_{H}(\tau),0],k}(H)
    \end{align*}
    for $0 \leq \tau \leq s_H$.
\end{theorem}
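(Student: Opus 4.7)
The plan is to apply Corollary \ref{cor3}, which gives $SH^{>-\tau,k}_M(K) \cong \varinjlim_{\sigma \to \infty} HF^{(-\tau,0],k}(\sigma H)$, and then to show that this direct limit is already realized by the single Hamiltonian $H$ once the action window is adjusted from $(-\tau,0]$ to $(a_H(\tau),0]$. The first step is to choose $H$ so that upper orbits can be discarded in the relevant degrees. Fixing $k$, set $B = \max\{b(n-k-1),\,b(n-k),\,b(n-k+1)\}$, which is finite by index-boundedness, and pick a contact type $K$-semi-admissible $H$ with $c_H > (1+3r_1)B$. By the estimate \eqref{ua}, every upper orbit of $H$ of Conley-Zehnder index in $\{k-1,k,k+1\}$ then has strictly positive action, and since $c_{\sigma H} = \sigma c_H$ this persists for all $\sigma \geq 1$. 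Hence both $CF^{(a_H(\tau),0],\sim k}(H)$ and $CF^{(-\tau,0],\sim k}(\sigma H)$ reduce to their lower-orbit parts (together with the perturbed critical points on $K$).

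The next step is to match generators on both sides. By \eqref{for}, a lower orbit $x_\gamma^H$ attached to a Reeb orbit $\gamma$ of period $\tau_\gamma \leq s_H$ has action $a_H(\tau_\gamma)$, and the strict monotonicity of $a_{h_1}$ on $[0,s_H]$ from \eqref{bound} shows that $x_\gamma^H$ lies in $CF^{(a_H(\tau),0],k}(H)$ precisely when $\tau_\gamma < \tau$. For $\sigma H$, the corresponding lower orbit $x_\gamma^{\sigma H}$ has action $\sigma\, a_H(\tau_\gamma/\sigma)$, which lies in $(-\tau,0]$ for $\sigma$ sufficiently large (depending on $\tau_\gamma$). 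Index-boundedness ensures that only finitely many Reeb orbits appear in each of the three relevant Conley-Zehnder degrees with period below $\tau$, so the direct system $\{HF^{(-\tau,0],k}(\sigma H)\}$ stabilizes on the level of generators beyond some finite $\sigma_0$, with the same generator set as $HF^{(a_H(\tau),0],k}(H)$.

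Finally, I would promote this bijection to an isomorphism of filtered cohomologies using the continuation map $\Phi_\sigma \colon CF(H) \to CF(\sigma H)$ induced by a monotone homotopy. Since $\Phi_\sigma$ is action-increasing and the maximum principle for the cylindrical almost complex structure confines Floer and continuation cylinders between lower orbits to the sub-collar $\partial K \times [1,1+r_1]$, the restriction of $\Phi_\sigma$ to the lower complex sends $x_\gamma^H$ to $x_\gamma^{\sigma H}$ plus lower-action lower-orbit corrections. Combining this with the generator bijection and the stabilization of the direct system at $\sigma_0$ yields the sought chain-level quasi-isomorphism after passing to the quotient by $CF^{>0}$. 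The hard part of the argument is exactly this last identification: because $a_H(\tau) \leq -\tau$ the two truncation windows are not literally the same, so one must verify that the continuation cylinders counted in the direct limit recover precisely the Floer differential of $H$ on $CF^{(a_H(\tau),0],k}(H)$. This is where the maximum-principle confinement and the choice of $c_H$ are essential: they prevent any trajectory from escaping into the upper region or the exterior of $K\cup(\partial K\times [1,1+3r_1])$, so the two differentials match through the natural bijection of generators.
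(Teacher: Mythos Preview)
Your setup through the choice of $H$ and the generator bijection is fine and parallels the paper's first step. The gap is in the final identification: a bijection of generators of $CF^{(a_H(\tau),0],\sim k}(H)$ and $CF^{(-\tau,0],\sim k}(\sigma H)$ for $\sigma\gg 0$ does not by itself give an isomorphism of cohomologies, and the continuation map $\Phi_\sigma$ does not visibly do the job. On $\partial K\times[1,1+r_1]$ the Hamiltonians $H$ and $\sigma H$ differ by a large factor, so the moduli spaces defining their differentials are genuinely different; there is no reason for ``the two differentials to match through the natural bijection of generators''. Moreover $\Phi_\sigma$ is only known to be action-increasing: it sends $CF^{>a_H(\tau)}(H)$ into $CF^{>a_H(\tau)}(\sigma H)$, but the target window you need is $(-\tau,0]$ with $-\tau\geq a_H(\tau)$, so $\Phi_\sigma$ need not respect the truncation you want, and there is no argument that the induced map on the quotients is a quasi-isomorphism. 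Likewise, the claim that the direct system stabilizes on generators does not imply that the continuation maps in the direct system become isomorphisms on cohomology.

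The paper circumvents exactly this difficulty by never comparing $H$ and $\sigma H$ directly. It inserts an intermediate $K$-semi-admissible $G$ with $G=H$ on $K\cup(\partial K\times[1,r_0])$ (where $h'(r_0)=\tau$) and $s_G=\sigma s_H$; the maximum principle then gives $HF^{(a_H(\tau),0],k}(H)\cong HF^{(a_G(\tau),0],k}(G)$ as an identity of complexes, because the Hamiltonians literally agree on the region containing all relevant orbits and trajectories. The passage from $G$ to $\sigma H$ is done along the linear homotopy $F_{\sigma,t}$ with constant slope, tracking the moving action window $(a_{F_{\sigma,t}}(\tau),0]$ and using that $a_{F_{\sigma,t}}(\tau)\notin\mathrm{Spec}(F_{\sigma,t})$ to chain together small isomorphisms. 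Only after establishing $HF^{(a_H(\tau),0],k}(H)\cong HF^{(a_{\sigma H}(\tau),0],k}(\sigma H)$ for all $\sigma$ does one take $\sigma\to\infty$ and invoke $a_{\sigma H}(\tau)\to-\tau$ together with Corollary~\ref{cor3}. Your outline is missing precisely this intermediate factorization and the spectrum-tracking argument, which is what actually produces the isomorphism rather than just a bijection of generators.
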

\begin{proof}
Let $H$ be a contact type $K$-semi-admissible Hamiltonian function satisfying the following condition: Choose a constant $c_{H} >0$ sufficiently large so that the upper orbits of $H$ of Conley-Zehnder index $k-1,k$ and $k+1$ have positive actions. As shown above, this choice is possible thanks to the index-boundedness of $(\partial K, \alpha)$ together with the action estimate \eqref{ua} for upper orbits. For $\sigma>1$, we shall prove that 
\begin{align}
    HF^{(a_H(\tau),0],k}(H) \cong HF^{(a_{\sigma H}(\tau),0],k}(\sigma H)
\end{align}
for $0\leq \tau \leq s_H$. Fix $\tau \in [0,s_H]$. Then there exists a unique $r_0\in [1,1+r_1]$ such that $A_H(r_0) = a_H(\tau)$, that is, $h'(r_0)=\tau$. Consider a $K$-semi-admissible Hamiltonian function $G$ such that\vspace{0.1 cm}
\begin{itemize}
    \item $H \leq G \leq \sigma H$,\vspace{0.2 cm}
    \item $G = H$ on $[1, r_0]$, and\vspace{0.2 cm}
    \item $s_{G} = s_{\sigma H} = \sigma s_H$.\vspace{0.1 cm}
\end{itemize}
Note that $a_H(\tau) = a_G(\tau)$ and $g'(r_0) =\tau$. The monotone homotopy from $H$ to $G$ induces an isomorphism
\begin{align}\label{link1}
    HF^{(a_H(\tau),0],k}(H) \cong HF^{(a_H(\tau),0],k}(G) 
\end{align}
Indeed, the generators of the two complexes coincide, and, by the maximum principle, every Floer trajectory connecting 1-periodic orbits contained in $K_{1+r_1} = K \cup \left( \partial K \times [1,1+r_1] \right)$ has its image entirely in $K_{1+r_1}$. 

\vspace{0.2 cm}

Let $$F_{\sigma,t} = (1-t)G + t (\sigma H)\,\, ,\,\,0\leq t \leq 1$$ be the increasing linear homotopy from $G$ to $\sigma H$. Observe that the slope of $F_{\sigma,t}$ is independent of $t$; in fact, $s_{F_{\sigma,t}} = \sigma s_H$

\vspace{0.2 cm}

We first consider the case when $a_{H}(\tau) \notin \text{Spec}(H)$. Since $H =G$ on $K \cup (\partial K \times [1,r_0])$ and $h'(r_0) = \tau = g'(r_0)$, we have $a_H(\tau) = a_G(\tau) \notin \textrm{Spec}(G)$. Consider a function
\begin{align*}
    \phi_{\sigma,t} = a_{F_{\sigma,t}} \circ a_G^{-1} : [-i_G, 0] \lr [0, s_G]= [0, s_{F_{\sigma,t}}] \lr [-i_{F_{\sigma,t}},0].
\end{align*}
According to \cite{cggm2}, this map $\phi_{\sigma,t}$ sends the spectra of $G$ to the spectra of $F_{s,t}$. More precisely, $$\phi_{\sigma,t}(\textrm{Spec}(G)) = [-i_{F_{\sigma,t}},0] \cap \textrm{Spec}(F_{\sigma,t}).$$ Thus we obtain
\begin{align}\label{isoing}
    a_{F_{\sigma,t}}(\tau) = \phi_{\sigma,t}(a_H(\tau)) \notin \textrm{Spec}(F_{\sigma,t})
\end{align}
for all $t\in[0,1]$. Since $a_{F_{\sigma,t}}(\tau)$ is continuous with respect to $t$, for each $t \in [0,1]$, we can choose $\delta(t) > 0$ such that 
\begin{align*}
    HF^{(a_{F_{\sigma,t}}(\tau),0],k}(F_{\sigma,t}) \cong HF^{(a_{F_{\sigma,t'}}(\tau),0],k}(F_{\sigma,t})\,\,\textrm{for}\,\,|t-t'| < \delta(t).
\end{align*}
We can choose finitely many points $0=t_0 < t_1 < t_2 < \cdots< t_N = 1$ such that \vspace{0.2 cm}
\begin{itemize}
    \item $HF^{(a_{F_{\sigma,t_i}}(\tau),0],k}(F_{\sigma,t_i}) \cong HF^{(a_{F_{\sigma,t'}}(\tau),0],k}(F_{\sigma,t_i})$ for $|t_i-t'| < \delta(t_i) :=\delta_i$, and\vspace{0.3 cm}
    \item $t_{i+1} - \delta_{i+1} < t_{i} + \delta_{i}$ and hence there exists $\eta_i \in (t_{i+1} - \delta_{i+1}, t_{i} + \delta_{i})$ for each $i=0,1,2,\cdots,N-1$.\vspace{0.2 cm}
\end{itemize}
For notational convenience, let $I_{t} = (a_{F_{\sigma,t}}(\tau),0]$. Then by the choice of $\eta_i$, we have
\begin{align}\label{ibs}
    HF^{I_{t_i},k}(F_{\sigma,t_i}) \cong HF^{I_{\eta_i},k}(F_{\sigma,t_i}).
\end{align}
Therefore, for each $i = 0,1,2, \cdots, N-1$, we obtain the following isomorphism
\begin{align}\label{mola}
    HF^{I_{t_i},k}(F_{\sigma,t_i}) \cong HF^{I_{t_{i+1}},k} (F_{\sigma,t_{i+1}}).
\end{align}
Indeed, for each $i$,
\begin{align*}
    HF^{I_{t_i},k}(F_{\sigma,t_i}) &\cong HF^{I_{\eta_i},k} (F_{\sigma,t_i}) &&\textrm{by}\,\,\eqref{ibs}\\
    &\cong HF^{I_{\eta_i},k} (F_{\sigma,t_{i+1}})&&s_{F_{\sigma,t_i}} = s_{F_{\sigma,t_{i+1}}}\\
    &\cong HF^{I_{t_{i+1}},k} (F_{\sigma,t_{i+1}}) &&\textrm{by}\,\,\eqref{ibs}.
\end{align*}
Consequently, 
\begin{align}\label{link2}
    HF^{(a_H(\tau),0],k}(G) = HF^{I_{t_0},k}(F_{s,t_0})\cong \cdots \cong HF^{I_{t_N},k}(F_{s,t_N}) = HF^{(a_{\sigma H}(\tau),0],k}(\sigma H)
\end{align}
where the intermediate isomorphisms follow from \eqref{mola}. Combining \eqref{link1} and \eqref{link2} together, we obtain
\begin{align*}
     HF^{(a_H(\tau),0],k}(H) \cong HF^{(a_{\sigma H}(\tau),0],k}(\sigma H). 
\end{align*}

\vspace{0.2 cm}

Now assume that $a_H(\tau) \in \textrm{Spec}(H)$. We can choose $\epsilon_0 >0$ such that $a_H(\tau+\epsilon ) \notin \textrm{Spec}(H)$ for $0< \epsilon \leq \epsilon_0$. Then by the argument above, we have an isomorphism
\begin{align*}
    HF^{(a_H(\tau +\epsilon), 0],k}(H) \cong HF^{(a_{\sigma H}(\tau +\epsilon),0],k}(\sigma H).
\end{align*}
For $0< \epsilon \leq \epsilon' \leq \epsilon_0$, there exists a map 
\begin{align*}
    HF^{(a_H(\tau+\epsilon ),0],k}(H) \lr HF^{(a_H(\tau+\epsilon' ),0],k}(H)
\end{align*}
induced by the inclusion. It follows that $\{ HF^{(a_H(\tau+\epsilon ), 0],k}(H)  \}_{\epsilon \in (0, \epsilon_0]}$ forms a direct system and 
\begin{align*}
    \varinjlim_{\epsilon \to 0} HF^{(a_H(\tau+\epsilon ), 0],k}(H) = HF^{(a_H(\tau),0],k}(H). 
\end{align*}
The same reasoning still holds for $HF^{(a_{\sigma H}(\tau +\epsilon), 0],k}(\sigma H)$ and therefore we have
\begin{align*}
    HF^{(a_H(\tau),0],k}(H) \cong HF^{(a_{\sigma H}(\tau),0],k}(\sigma H).
\end{align*} 

\vspace{0.2cm}

 To recapitulate, in any cases, we have the following isomorphism:
\begin{align}\label{limi}
    HF^{(a_H(\tau),0],k}(H) \cong HF^{(a_{\sigma H}(\tau),0],k}(\sigma H).
\end{align}
Moreover, the isomorphism \eqref{limi} is constructed as a composition of intermediate maps, each of which is either an inclusion map or a continuation map.

\vspace{0.2 cm}

For $\sigma_1 \leq \sigma_2$, we have a map
\begin{align*}
    HF^{(a_{\sigma_1H}(\tau),0],k}(\sigma_1H) \lr HF^{(a_{\sigma_2H}(\tau),0],k}(\sigma_1H) \lr HF^{(a_{\sigma_2H}(\tau),0],k}(\sigma_2H) 
\end{align*}
where the first map is induced by inclusion and the second map is the continuation map. Note that these maps are compatible with the isomorphism \eqref{limi} and that $\{ HF^{(a_{\sigma H}(\tau),0],k}(\sigma H)\}_{\sigma>1}$ forms a direct system.  As shown in \cite{cggm2}, we have
\begin{align*}
    a_{\sigma H}(\tau) \to -\tau \,\,\textrm{as}\,\,\sigma \to \infty
\end{align*}
uniformly on compact sets in $[0, \infty)$. Using this limit together with Corollary \ref{cor3}, we obtain
\begin{align*}
    \varinjlim_{\sigma \to \infty}HF^{(a_{\sigma H}(\tau),0],k}(\sigma H) = SH^{>-\tau,k}_M(K). 
\end{align*}
Finally, taking $\displaystyle\varinjlim_{\sigma \to \infty}$ on both sides of \eqref{limi} completes the proof.\\
\end{proof}
\begin{corollary}\label{hamisocoro}
    Let $H$ be a $K$-semi-admissible Hamiltonian function. Then for each $k \in \ZZ$, there exists a constant $\sigma >0$ such that
    \begin{align*}
        SH^{>-\tau,k}_M(K) \cong HF^{(a_{\sigma H}(\tau),0],k}(\sigma H)
    \end{align*}
    for $0 \leq \tau \leq s_{\sigma H} = \sigma s_H$.
\end{corollary}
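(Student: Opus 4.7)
The plan is to deduce Corollary \ref{hamisocoro} from Theorem \ref{hamiso} by a straightforward rescaling argument, with essentially no new analytic input. The first step is the observation that for any $K$-semi-admissible Hamiltonian $H$ and any $\sigma > 0$, the rescaled function $\sigma H$ is again a contact type $K$-semi-admissible Hamiltonian: the profile functions $h_1, h_2$ are replaced by $\sigma h_1, \sigma h_2$, which preserves the strict convexity and strict concavity required in condition (A2), while the associated numerical data transform linearly as $s_{\sigma H} = \sigma s_H$, $i_{\sigma H} = \sigma i_H$, and $c_{\sigma H} = \sigma c_H$.

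The second step is to verify that, for $\sigma$ sufficiently large, the Hamiltonian $\sigma H$ meets the qualitative hypothesis used in the proof of Theorem \ref{hamiso}, namely that all upper orbits of Conley-Zehnder index $k-1, k, k+1$ have strictly positive action. By the action estimate \eqref{ua} together with the index-boundedness of $(\partial K, \alpha)$, any such upper orbit $x$ of $\sigma H$ satisfies
\begin{align*}
\mathcal{A}_{\sigma H}(x) \geq c_{\sigma H} - (1+3r_1)\,\max\{b(n-k+1),\, b(n-k),\, b(n-k-1)\}.
\end{align*}
Since $c_{\sigma H} = \sigma c_H$ and the bounds $b(j)$ depend only on $k$ and $(\partial K, \alpha)$, any
\begin{align*}
\sigma > \frac{(1+3r_1)\,\max\{b(n-k+1),\, b(n-k),\, b(n-k-1)\}}{c_H}
\end{align*}
makes the right-hand side strictly positive.

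With such a $\sigma$ fixed, I would apply Theorem \ref{hamiso} verbatim to $\sigma H$ in place of $H$ to obtain
\begin{align*}
SH^{>-\tau,k}_M(K) \cong HF^{(a_{\sigma H}(\tau),0],k}(\sigma H)
\end{align*}
for all $0 \leq \tau \leq s_{\sigma H} = \sigma s_H$, which is precisely the statement of the corollary. There is no real obstacle: all the substantive work (the spectrum-tracking function $\phi_{\sigma,t}$, the maximum principle on the collar, and the limit $a_{\sigma H}(\tau)\to -\tau$ uniformly on compact sets) has already been carried out in the proof of Theorem \ref{hamiso}, and this corollary is essentially a repackaging that permits starting from an arbitrary $K$-semi-admissible $H$ and enlarges the range of admissible $\tau$ to $[0,\sigma s_H]$.
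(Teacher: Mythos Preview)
Your proposal is correct and follows essentially the same approach as the paper: the paper's proof simply notes that one can choose $\sigma>0$ so that all upper orbits of $\sigma H$ with Conley--Zehnder index $k-1,k,k+1$ have positive action (exactly your second step), and then states that the argument of Theorem~\ref{hamiso} goes through verbatim with $\sigma H$ in place of $H$. You have spelled out the rescaling and the explicit threshold for $\sigma$ a bit more carefully, but there is no substantive difference.
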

\begin{proof}
We can choose $\sigma >0$ so that all upper orbits of $\sigma H$ with Conley-Zehnder index $k-1,k$ and $k+1$ have positive actions as in the arguments used earlier. With this choice, the argument proceeds analogously to the proof of Theorem~\ref{hamiso}, and we therefore omit the details.\\    
\end{proof}

\begin{remark}\label{rmk1}
The proofs of Theorem \ref{hamiso} and Corollary \ref{hamisocoro} also imply the following statements.
\begin{enumerate}[label=(\alph*)]
    \item Let $A \subset \ZZ$ be a finite subset of $\ZZ$. \vspace{0.2cm}
    \begin{itemize}
       \item There exists a contact type $K$-semi-admissible Hamiltonian function $H$ such that there exists an isomorphism
    \begin{align*}
        SH^{>-\tau,k}_M(K) \cong HF^{(a_{H}(\tau),0],k}(H)
    \end{align*}
    for $0 \leq \tau \leq s_H$ and for $k \in A$. \vspace{0.2cm}
    \item For any contact type $K$-semi-admissible Hamiltonian function $H$, there exists constant $\sigma >0$ such that
    \begin{align*}
         SH^{>-\tau,k}_M(K) \cong HF^{(a_{\sigma H}(\tau),0],k}(\sigma H)
    \end{align*}
    for $0 \leq \tau \leq s_{\sigma H}$ and for $k \in A$.\vspace{0.2cm}
    \end{itemize}
    \item Let $H$ be a contact type $K$-semi-admissible Hamiltonian function and let
    \begin{align*}
        I_H = \left\{ k \in \ZZ \bigmid \substack{ \textstyle \text{the actions of upper orbits of } H \text{ of Conley-Zehnder indices } $\vspace{0.2cm}$\\ 
             \,\,\textstyle k-1, k \text{ and } k+1 \text{ are positive}} \right\}.
    \end{align*}
    Then there exists an isomorphism 
    \begin{align*}
        SH_M^{>-\tau,k}(K) \cong HF^{(a_{H}(\tau),0],k}(H).
    \end{align*}
    for $0\leq\tau\leq s_H$ and for $k \in I_H$.
   \end{enumerate}
       \end{remark}

\vspace{0.2cm}  

\section{Barcode entropies}
\subsection{Definition}

For a persistence module $V$, there exists a unique barcode $\mathcal{B}(V)$ of $V$. See the normal form theorem (Theorem \ref{nft}). For any $\epsilon>0$, let
\begin{align*}
    \mathcal{B}_{\epsilon}(V) = \{\textrm{bars in}\,\,\mathcal{B}(V)\,\,\textrm{of length greater than}\,\,\epsilon  \}
\end{align*}
and $b_{\epsilon}(V)$ be the number of bars in $\mathcal{B}_{\epsilon}(V)$, i.e.,
\begin{align*}
   b_{\epsilon}(V) = | \mathcal{B}_{\epsilon}(V)|.
\end{align*}
\begin{definition}\label{entro}
    Let $V$ be a persistence module. For any $\epsilon >0$, the \textbf{$\epsilon$-barcode entropy} $\hbar_{\epsilon}(V)$ of $V$ is defined to be
\begin{align*}
    \hbar_{\epsilon}(V) = \limsup_{\sigma \to \infty} \frac{1}{\sigma}\log^+ b_{\epsilon}(\textrm{tru}(V,\sigma)),
\end{align*}
where $\log^+a = \max\{\log_2 a, 0\}$. The \textbf{barcode entropy} $\hbar(V)$ of $V$ is defined by
    \begin{align*}
        \hbar(V) = \lim_{\epsilon \to 0} \hbar_{\epsilon}(V).
    \end{align*}
    \end{definition}

\vspace{0.2 cm}

It follows from Theorem \ref{hamiso} that the assignment
$$\tau \mapsto SH^{>-\tau,k}_M(K)$$ defines a persistence module, whose structure maps
$$SH^{>-\tau_1,k}_M(K) \lr SH^{>-\tau_2,k}_M(K)$$ for $\tau_1 \leq\tau_2$ are induced by the inclusion maps. We denote this persistence module by $SH^k_M(K)$. Let $\mathcal{B}(SH^k_M(K))$ be the barcode of the persistence module $SH^k_M(K)$. Since 
\begin{align*}
    SH^{>-\tau}_M(K) = \bigoplus_{k \in \ZZ} SH^{>-\tau,k}_M(K),
\end{align*}
the barcode $\mathcal{B}(SH_M(K))$ of $SH_M(K)$ is defined to be
\begin{align*}
    \mathcal{B}(SH_M(K)) = \coprod_{k \in \ZZ} \mathcal{B}(SH^k_M(K)).
\end{align*}

\vspace{0.2 cm}

Following Definition \ref{entro}, we define the \textbf{relative symplectic cohomology barcode entropy} $\hbar(SH_M(K))$ of relative symplectic cohomology $SH_M(K)$ by
\begin{align*}
    \hbar(SH_M(K)) = \lim_{\epsilon \to 0} \hbar_{\epsilon} (SH_M(K))
\end{align*}
where
\begin{align*}
    \hbar_{\epsilon} (SH_M(K)) = \limsup_{\sigma \to \infty} \frac{1}{\sigma}\log^+b_{\epsilon}(\textrm{tru}(SH_M(K), \sigma)).
\end{align*}
\vspace{0.2cm}

We can also define the barcode entropy associated with a Hamiltonian function. For a contact type $K$-semi-admissible Hamiltonian function $H$, the assignment $$\tau \mapsto HF^{>-\tau}(H)$$ defines a persistence module and we denote this persistence module by $HF(H)$. We define the \textbf{Hamiltonian barcode entropy} $\hbar(HF(H))$ of $H$ by
\begin{align*}
    \hbar(HF(H)) = \lim_{\epsilon \to 0} \hbar_\epsilon(HF(H))
\end{align*}
where
\begin{align*}
    \hbar_\epsilon(HF(H)) = \limsup_{\sigma \to \infty} \frac{1}{\sigma} \log^+ b_\epsilon \left( \textrm{tru} (HF(\sigma H), \sigma i_H) \right).
\end{align*}
In view of Theorem \ref{hamiso}, we consider the persistence module $\tau \mapsto HF^{(-\tau,0]}(H)$, which we denote by $HF^{\textrm{neg}}(H)$. The \textbf{negative Hamiltonian barcode entropy} $\hbar(HF^{\textrm{neg}}(H))$ of $H$ is defined by
\begin{align*}
    \hbar(HF^{\textrm{neg}}(H)) = \lim_{\epsilon \to 0}\hbar_\epsilon(HF^{\textrm{neg}}(H))
\end{align*}
where
\begin{align*}
    \hbar_\epsilon(HF^{\textrm{neg}}(H)) = \limsup_{\sigma \to \infty} \frac{1}{\sigma} \log^+ b_\epsilon \left( \textrm{tru} (HF^{\textrm{neg}}(\sigma H), \sigma i_H) \right).
\end{align*}

\vspace{0.2 cm}
\subsection{Comparison of barcode entropies}
We would like to compare the barcode entropies defined above in this subsection. The following theorem elucidates the interplay between the relative symplectic cohomology barcode entropy and the Hamiltonian barcode entropy.
 
\begin{theorem}\label{rene}
    There exists a contact type $K$-semi-admissible Hamiltonian function $H$ such that for any $\epsilon>0$, 
         \begin{align*}
       \hbar_{\epsilon}(SH_M(K)) \leq \hbar_{\epsilon}(HF^{\textrm{neg}}(H))
    \end{align*}
    and hence, taking $\epsilon \to 0$, we have  
    \begin{align*}
       \hbar(SH_M(K)) \leq \hbar(HF^{\textrm{neg}}(H)).
        \end{align*}

\end{theorem}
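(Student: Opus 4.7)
The plan is to fix a single contact type $K$-semi-admissible Hamiltonian function $H$, rescaled so that $s_H \geq 1$ and $i_H \geq 1+r_1$, and then compare the persistence modules $SH^k_M(K)$ and $HF^{\textrm{neg}, k}(\sigma H)$ grade by grade using the isomorphism of Corollary \ref{hamisocoro} together with the bi-Lipschitz reparametrization $\tau \mapsto -a_{\sigma H}(\tau)$. Both rescaling conditions on $H$ can be arranged by multiplying $H$ by a positive constant; $s_H \geq 1$ guarantees that the action window on which the isomorphism applies covers $[0,\sigma]$, while $i_H \geq 1+r_1$ absorbs the Lipschitz distortion coming from the reparametrization.

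For each $k \in \ZZ$, Corollary \ref{hamisocoro} supplies a threshold $\sigma_k > 0$ (and the proof in fact shows the same conclusion holds for every $\sigma \geq \sigma_k$, since upper orbits of $\sigma H$ only gain action as $\sigma$ grows) such that
\[
SH^{>-\tau, k}_M(K) \cong HF^{(a_{\sigma H}(\tau), 0], k}(\sigma H), \qquad 0 \leq \tau \leq \sigma s_H.
\]
The isomorphism constructed in the proof of Theorem \ref{hamiso} is a composition of inclusion and continuation maps, hence commutes with the persistence structure maps after the change of variable $\tau \mapsto \tau' := -a_{\sigma H}(\tau)$. From the derivative bound $-1-r_1 \leq a_{h_1}'(\tau) \leq -1$ (and the analogous bound for $h_2$) one obtains $1 \leq d\tau'/d\tau \leq 1+r_1$ on the relevant range; thus the reparametrization never shortens bars and maps the truncation level $\tau = \sigma$ to $\tau' = -a_{\sigma H}(\sigma) \leq (1+r_1)\sigma \leq \sigma i_H$. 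Consequently any bar of length $>\epsilon$ in $\textrm{tru}(SH^k_M(K), \sigma)$ yields, via the isomorphism, a bar of length $>\epsilon$ in $\textrm{tru}(HF^{\textrm{neg}, k}(\sigma H), \sigma i_H)$, giving the grade-wise estimate
\[
b_\epsilon\bigl(\textrm{tru}(SH^k_M(K), \sigma)\bigr) \leq b_\epsilon\bigl(\textrm{tru}(HF^{\textrm{neg}, k}(\sigma H), \sigma i_H)\bigr)
\]
valid whenever $\sigma \geq \sigma_k$.

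The main obstacle is summing this grade-wise inequality to a global one, since $\sigma_k$ depends on $k$. The key observation is that for each fixed $\sigma$ the set $K_\sigma$ of degrees with $b_\epsilon(\textrm{tru}(SH^k_M(K), \sigma))>0$ is finite: via the direct-limit description of Corollary \ref{cor3}, the truncation is computed from a Floer complex whose generators correspond to Reeb orbits of bounded period, and the index-boundedness of $(\partial K, \alpha)$ forces these to lie in finitely many degrees. A quantitative comparison of the thresholds $\sigma_k$, which by the estimate \eqref{ua} are controlled by $(1+3r_1)\,b(n-k)/c_H$, with the index-boundedness constants governing $K_\sigma$ then shows that, for $\sigma$ sufficiently large, $K_\sigma \subset \{k : \sigma \geq \sigma_k\}$. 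Summing the grade-wise inequality over $k \in K_\sigma$ yields
\[
b_\epsilon\bigl(\textrm{tru}(SH_M(K), \sigma)\bigr) \leq b_\epsilon\bigl(\textrm{tru}(HF^{\textrm{neg}}(\sigma H), \sigma i_H)\bigr);
\]
applying $\limsup_{\sigma \to \infty} \frac{1}{\sigma} \log^+$ gives $\hbar_\epsilon(SH_M(K)) \leq \hbar_\epsilon(HF^{\textrm{neg}}(H))$, and sending $\epsilon \to 0$ completes the proof. The delicate point is therefore this last reconciliation of the $k$-dependence of $\sigma_k$ with the sum over contributing degrees in $b_\epsilon(\textrm{tru}(SH_M(K), \sigma))$, which is where the index-boundedness hypothesis is essential.
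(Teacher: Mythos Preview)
Your overall architecture matches the paper's proof: compare $SH^k_M(K)$ with $HF^{\textrm{neg},k}(\sigma H)$ via the isomorphism of Theorem~\ref{hamiso}/Corollary~\ref{hamisocoro}, transport bars through the bi-Lipschitz reparametrization $\tau\mapsto -a_{\sigma H}(\tau)$, and then sum over the finitely many contributing degrees. The disagreement is entirely in the normalization of $H$ and in how the degree-dependence of the isomorphism threshold is controlled.

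There is a genuine gap at exactly the point you flag as delicate. The threshold $\sigma_k$ produced by Corollary~\ref{hamisocoro} is governed by the \emph{maximal} period among Reeb orbits of Reeb Conley--Zehnder index $n-k,\, n-k\pm 1$, i.e.\ by the index-bound constants $b(n-k),\, b(n-k\pm 1)$. Membership $k\in K_\sigma$, on the other hand, only tells you that \emph{some} Reeb orbit of Hamiltonian index near $k$ has period below $\sigma$; it gives no upper bound whatsoever on $b(n-k)$ or on $b(n-k\pm 1)$. So the inclusion $K_\sigma\subset\{k:\sigma\geq\sigma_k\}$ does not follow from index-boundedness: nothing prevents a single short Reeb orbit of index $m$ (forcing $n-m\in K_\sigma$ for small $\sigma$) from coexisting with very long orbits of the same index (forcing $\sigma_{n-m}$ to be enormous). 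Your ``quantitative comparison'' is asserted but cannot be carried out as stated.

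The paper sidesteps this by imposing a different condition on $H$: instead of your $s_H\geq 1$ and $i_H\geq 1+r_1$, it requires $c_H\geq 1+3r_1$. Plugged into the action estimate \eqref{ua}, this single inequality forces every upper orbit of $\sigma H$ that corresponds to a Reeb orbit of period less than $\sigma$ to have positive action, \emph{uniformly in the Conley--Zehnder index}. Since the only Reeb orbits relevant to $b_\epsilon(\textrm{tru}(SH_M(K),\sigma))$ are those of period below $\sigma$, this replaces your family of $k$-dependent thresholds $\sigma_k$ by a condition that is automatically satisfied at the given scale $\sigma$, and the sum over $A_\sigma$ goes through without any reconciliation step. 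Your normalizations do not imply $c_H\geq 1+3r_1$ and do not produce this uniform control; adopting the paper's choice of $H$ closes the gap.
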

\begin{proof}
Choose a $K$-semi-admissible Hamiltonian function $H$ satisfying
\begin{align}\label{ch}
    c_H \geq 1+3r_1
\end{align}
Observe that the condition \eqref{ch} only depends on $r_1$, that is, on the position of $K$ inside $M$ and it forces to $s_H$ to be greater than 1. Indeed, if $s_H \leq 1$, then
\begin{align*}
    1+3r_1 &\leq c_H &&\textrm{assumption}\\
    &\leq 3r_1 s_H &&\textrm{by \eqref{chsh}}\\
    &\leq 3r_1,
\end{align*}
which yields a contradiction.

\vspace{0.2cm}

For each $\sigma>0$, the number of bars $b_{\epsilon} \left( \textrm{tru}(SH_M(K),\sigma ) \right)$ is finite and hence only finitely many grading degrees contribute nontrivially to it. Consequently, there exists a finite subset $A_\sigma$ of $\ZZ$ such that
\begin{align*}
    b_{\epsilon} \left( \textrm{tru}(SH_M(K),\sigma ) \right) = b_{\epsilon} \left( \textrm{tru}\left(\bigoplus_{k \in A_\sigma}SH^k_M(K),\sigma \right) \right) = \sum_{k \in A_\sigma}b_{\epsilon} \left( \textrm{tru}(SH^k_M(K),\sigma ) \right).
\end{align*}
To estimate $b_{\epsilon} \left( \textrm{tru}(SH_M(K),\sigma ) \right)$, it suffices to consider Reeb orbits of $(\partial K,\alpha)$ with periods less than $\sigma - \epsilon$. We claim that
\begin{align}\label{ingiso}
     SH^{>-\tau,k}_M(K) \cong  HF^{(a_{\sigma H}(\tau),0],k}(\sigma H)
\end{align}
for $0\leq \tau\leq\sigma \leq \sigma s_H$ and $k \in A_\sigma$. To justify this claim, it is enough to show that the upper orbits of $\sigma H$ have positive actions in view of the proof of Theorem \ref{hamiso}, Corollary \ref{hamisocoro} and Remark \ref{rmk1}. Let $x$ be an upper orbit of $\sigma H$ corresponding to a Reeb orbit $\gamma$ with $\mathcal{A}(\gamma) < \sigma - \epsilon$. Then we obtain
\begin{align*}
   \mathcal{A}_{\sigma H}(x) &\geq c_{\sigma H} - (1+3r_1)\mathcal{A}(\gamma) &&\textrm{by \eqref{ua}}\\&> \sigma c_H- (1+3r_1)(\sigma -\epsilon)\\&>0 &&\textrm{by the choice of }\,\,H,
\end{align*}
and this proves the claim.

\vspace{0.2cm}

Recall from \eqref{bound} that for $0 \leq \tau_1 \leq \tau_2 \leq \sigma \leq \sigma s_H = s_{\sigma H}$,
\begin{align*}
\tau_2 - \tau_1 \leq a_{\sigma H}(\tau_1) - a_{\sigma H}(\tau_2) \leq (1+r_1)(\tau_2 - \tau_1),
\end{align*}
and this together with the isomorphism \eqref{ingiso} implies that 
\begin{align}\label{oy}
    b_{\epsilon} \left( \textrm{tru}\left( \bigoplus_{k \in A_\sigma} SH^{k}_M(K),\sigma \right) \right) \leq b_{\epsilon} \left( \textrm{tru}\left(\bigoplus_{k \in A_\sigma}HF^{\textrm{neg},k}(\sigma H),-a_{\sigma H}(\sigma)\right) \right).
\end{align}
Therefore,
\begin{align*}
      \hbar_\epsilon(SH_M(K)) &= \limsup_{\sigma \to \infty}\frac{1}{\sigma} \log^+b_{\epsilon} \left( \textrm{tru}(SH_M(K),\sigma) \right)\\ 
      &=\limsup_{\sigma \to \infty}\frac{1}{\sigma} \log^+b_{\epsilon} \left( \textrm{tru}\left(\bigoplus_{k \in A_\sigma}SH^k_M(K),\sigma)\right) \right)
      \\& \leq\limsup_{\sigma \to \infty}\frac{1}{\sigma} \log^+b_{\epsilon}\left( \textrm{tru}\left(\bigoplus_{k \in A_\sigma}HF^{\textrm{neg},k}(\sigma H),-a_{\sigma H}(\sigma)\right) \right)&&\textrm{by}\,\,\eqref{oy}\\
      & \leq\limsup_{\sigma \to \infty}\frac{1}{\sigma} \log^+b_{\epsilon}\left( \textrm{tru}\left(\bigoplus_{k \in A_\sigma}HF^{\textrm{neg},k}(\sigma H),\sigma i_H\right) \right)&&-a_{\sigma H}(\sigma) \leq  i_{\sigma H}\\
      &\leq\limsup_{\sigma \to \infty}\frac{1}{\sigma} \log^+b_{\epsilon}\left( \textrm{tru}\left(HF^{\textrm{neg}}(\sigma H),\sigma i_H\right) \right)\\
      &= \hbar_{\epsilon}(HF^{\textrm{neg}}(H)).
\end{align*}
This completes the proof.

\end{proof}

\section{Upper bound of the relative symplectic cohomology barcode entropy}
\subsection{Lagrangian tomograph and Crofton's inequality}
We briefly discuss the idea of \textit{Lagrangian tomograph}. Let $(M',\omega')$ be a symplectic manifold and $L \subset M'$ be a Lagrangian submanifold. For a compact manifold $B$ possibly with boundary, a smooth map $$\Psi : B \times L \lr M$$ is called a \textbf{Lagrangian tomograph} of $L$ if \vspace{0.1 cm}
\begin{itemize}
    \item it is a submersion,\vspace{0.2 cm}
    \item the map $\Psi_s : L \lr M$ given by $  \Psi_s(x) = \Psi(s,x)$ is an embedding for each $s \in B$, \vspace{0.2 cm}
    \item the image $L_s = \Psi_s(L)$ is a Lagrangian submanifold of $(M',\omega')$ for each $s \in B$, and\vspace{0.2 cm}
    \item every $L_s$ is Hamiltonian isotopic to each other.\vspace{0.1 cm}
\end{itemize}
We call the Lagrangian submanifold $L$ the \textbf{core} of the tomograph $\Psi$. The existence of such a Lagrangian tomograph can be found in Lemma 5.6 of \cite{cgg}; A Lagrangian tomograph with its core $L$ and $\dim B = d$ exists if and only if $L$ admits an immersion into $\RR^d$. Moreover, they proved the following version of \textit{Crofton's inequality}.
\begin{theorem}[Crofton's inequality]\label{crof}
    Let $(M',\omega')$ be a symplectic manifold and $L_1, L_2 \subset M'$ be Lagrangian submanifolds with $\dim L_1=\operatorname{codim} L_2 $. Let $\Psi : B \times L_1 \lr M$ be a Lagrangian tomograph of $L_1$ and $ds$ be a smooth measure of $B$. Then there exists a constant $D >0$ depending on $\Psi$, $ds$ and a fixed metric on $M'$ but not depending on $L_2$ such that
    \begin{align*}
        \int_B N(s) ds \leq D\, \textrm{Vol}(L_2)
    \end{align*}
    where $N(s) = | L_s \cap L_2|$ and Vol denotes the volume.
\end{theorem}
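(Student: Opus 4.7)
The plan is to realize both sides of the inequality as integrals over a common incidence manifold and compare them using the coarea formula. I would define
\begin{align*}
W := \Psi^{-1}(L_2) \subset B \times L_1,
\end{align*}
which is a smooth submanifold of dimension $\dim B$ since $\Psi$ is a submersion (after a small transverse perturbation of $L_2$ within its Hamiltonian isotopy class, which preserves both sides of the desired inequality). The manifold $W$ carries two natural maps: the projection $p_B \colon W \to B$ and the evaluation $p_2 := \Psi|_W \colon W \to L_2$. For every $s \in B$ for which $L_s \pitchfork L_2$ (a full-measure subset by Sard's theorem), the fiber $p_B^{-1}(s)$ is identified with $L_s \cap L_2$ via the embedding $\Psi_s$, so $\#p_B^{-1}(s) = N(s)$.

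The main computation would proceed in two steps. First, applying the area formula to $p_B$, a smooth map between Riemannian manifolds of the same dimension, gives
\begin{align*}
\int_B N(s)\, ds = \int_W |Jp_B|\, dV_W.
\end{align*}
Second, applying the coarea formula to $p_2 \colon W \to L_2$, whose generic fibers have dimension $\dim B - n$, yields
\begin{align*}
\int_W |Jp_B|\, dV_W = \int_{L_2} \left( \int_{p_2^{-1}(p)} \frac{|Jp_B|}{|Jp_2|}\, dV_{\mathrm{fib}} \right) dp.
\end{align*}
Granted a uniform bound $|Jp_B|/|Jp_2| \leq C_1$ on $W$, and using that $p_2^{-1}(p)$ is a closed subset of the compact $W \subset B \times L_1$ (noting that $L_1$ is compact in the Floer-theoretic applications), the fiber volumes admit a uniform bound $\textrm{Vol}(p_2^{-1}(p)) \leq C_2$. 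Combining these gives $\int_B N(s)\, ds \leq C_1 C_2 \textrm{Vol}(L_2)$, so $D := C_1 C_2$ has the desired property, with $C_1, C_2$ depending only on $\Psi$, the measure $ds$, and the fixed metric on $M'$.

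The hard part will be justifying the uniform bound $|Jp_B|/|Jp_2| \leq C_1$, since both Jacobians genuinely vanish on the locus of non-transverse intersections $T_pL_s = T_pL_2$, and naive compactness of $W$ is not enough to rule out blow-up of the ratio. The key is a pointwise linear-algebra analysis: at $(s,x) \in W$ with $p = \Psi(s,x)$, both Jacobians can be expressed in terms of the angle in $T_pM'$ between $T_pL_s = d\Psi_s(T_xL_1)$ and $T_pL_2$, and their ratio cancels this angle cleanly. What remains is a smooth function on $W$ controlled by the first derivatives of $\Psi$ along $B$, which are bounded on the compact product $B \times L_1$. Once this pointwise bound is established, the rest is a routine application of the integral-geometric framework of \cite{cgg}.
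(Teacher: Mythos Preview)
The paper does not prove this result; its ``proof'' is a one-line citation to Lemma~5.3 of \cite{cgg}. Your sketch is the standard integral-geometric argument for Crofton-type inequalities and is essentially what \cite{cgg} carries out, so the approaches coincide.

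Two minor corrections to your outline. First, the parenthetical about perturbing $L_2$ is unnecessary and slightly misleading: since $\Psi$ is a submersion, $W=\Psi^{-1}(L_2)$ is automatically a smooth submanifold of dimension $\dim B$, and perturbing $L_2$ would in any case change both sides of the inequality. Second, your diagnosis that both Jacobians vanish at non-transverse intersections is only half right: $|Jp_B|$ does vanish exactly when $T_pL_s+T_pL_2\neq T_pM'$, but $p_2=\Psi|_W$ is \emph{always} a submersion onto $L_2$ (because $\ker dp_2=\ker d\Psi$), so $|Jp_2|$ never vanishes. Thus there is no $0/0$ indeterminacy. The genuine issue, which you correctly flag, is that $W$ and hence both Jacobians depend on $L_2$ through $T_pL_2$, so compactness of $W$ alone does not yield an $L_2$-independent bound. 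The clean fix is to regard $|Jp_B|$ and $|Jp_2|$ as continuous functions of $(s,x,V)$ on the compact Grassmann bundle $\{(s,x,V):V\in\mathrm{Gr}_n(T_{\Psi(s,x)}M')\}$ over $B\times L_1$; then $|Jp_2|$ is bounded below and $|Jp_B|$ above by constants depending only on $\Psi$ and the fixed metrics. The fiber-volume bound is even simpler than you suggest: $p_2^{-1}(p)=\Psi^{-1}(p)$ is literally independent of $L_2$, so $C_2$ comes directly from compactness of $B\times L_1$ and the geometry of $\Psi$.
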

\begin{proof}
    See Lemma 5.3 of \cite{cgg}.\\
\end{proof}
The integration in Theorem \ref{crof} makes sense because $\Psi_s$ is transverse to $L'$ for almost every $s \in B$ and hence $N(s) = | L_s \cap L'|$ is locally constant function outside of a measure zero set of $B$.
\subsection{Proof of Theorem \ref{thma}}
In our setting, we consider the symplectic manifold $(M', \omega') = ( M \times M,(-\omega)\oplus \omega)$, the Lagrangian submanifold $L = \Delta=\{(x,x) \in M \times M \mid x\in M\}$ of $M'$ and the Lagrangian tomograph $\Delta_s$ of the diagonal $\Delta$. For clarity and ease of reference, we restate Theorem \ref{thma} and proceed with its proof.
\begin{theorem}\label{thmare}
    There exists a constant $C=C(M,K) >0$, depending on the pair $(M,K)$, such that
    \begin{align*}
     \hbar (SH_M(K))\leq C\, h_{\textrm{top}}(\varphi_\alpha).
    \end{align*}
 \end{theorem}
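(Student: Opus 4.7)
The plan is to chain together Theorem \ref{rene}, the Lagrangian tomograph technique of \cite{cgg}, Crofton's inequality (Theorem \ref{crof}), Yomdin's inequality (Theorem \ref{y}), and an explicit comparison between the time-1 Hamiltonian flow of $H$ and the Reeb flow on $(\partial K, \alpha)$. By Theorem \ref{rene}, it suffices to exhibit a contact type $K$-semi-admissible Hamiltonian function $H$ satisfying $c_H \geq 1+3r_1$ for which
\[\hbar_\epsilon(HF^{\textrm{neg}}(H)) \leq s_H \cdot h_{\textrm{top}}(\varphi_\alpha^1)\]
holds for every $\epsilon > 0$. The constant $C(M,K)$ will then arise as the infimum of $s_H$ over all such $H$; via the inequalities $c_H \leq 3r_1 s_H$ and $c_H \geq 1+3r_1$, this infimum is minimized by maximizing the collar length $r_1$ permitted by the embedding of $K$ into $M$.

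For such an $H$ and each $\sigma > 0$, I would first identify the generators of $CF^{(-\sigma i_H, 0]}(\sigma H)$ with contractible 1-periodic orbits of $\varphi_{\sigma H}^t = \varphi_H^{\sigma t}$ in the relevant action window; in the symplectically aspherical setting these orbits are in bijection with intersection points of $\Delta$ and $\Gamma_{\varphi_H^\sigma}$ inside $(M \times M, (-\omega) \oplus \omega)$. Since each bar uses at most two generators,
\[b_\epsilon\bigl(\textrm{tru}(HF^{\textrm{neg}}(\sigma H), \sigma i_H)\bigr) \leq 2\, \bigl|\Delta_s \cap \Gamma_{\varphi_H^\sigma}\bigr|\]
for a generic member $\Delta_s$ of a Lagrangian tomograph $\Psi: B \times \Delta \to M \times M$, whose existence is guaranteed by Lemma 5.6 of \cite{cgg} since $\Delta \cong M$ smoothly immerses into a sufficiently large Euclidean space. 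Crofton's inequality (Theorem \ref{crof}) then produces, for each $\sigma$, some $s_\sigma \in B$ with $|\Delta_{s_\sigma} \cap \Gamma_{\varphi_H^\sigma}| \leq D' \cdot \textrm{Vol}(\Gamma_{\varphi_H^\sigma})$, and Yomdin's theorem (Theorem \ref{y}) bounds the exponential growth rate of $\textrm{Vol}(\Gamma_{\varphi_H^\sigma})$ by $h_{\textrm{top}}(\varphi_H^1)$, first for integer $\sigma$ via the iterates of $\varphi_H^1$ and then for real $\sigma$ by interpolation through the one-parameter flow. Combining these gives $\hbar_\epsilon(HF^{\textrm{neg}}(H)) \leq h_{\textrm{top}}(\varphi_H^1)$.

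The final and most delicate step will be to upgrade the right-hand side to $s_H \cdot h_{\textrm{top}}(\varphi_\alpha^1)$. On the linear piece $\partial K \times [1+r_1, 1+2r_1]$ of the collar one has $X_H = -s_H R_\alpha$, so the time-1 flow is $(p, r) \mapsto (\varphi_\alpha^{-s_H}(p), r)$, whose topological entropy is exactly $s_H \cdot h_{\textrm{top}}(\varphi_\alpha^1)$. On the two strictly convex/concave transition intervals the flow takes the same form $(p,r) \mapsto (\varphi_\alpha^{-h'(r)}(p), r)$ with $|h'(r)| \leq s_H$, while outside the collar the flow is the identity on the constant region and a small, entropy-free perturbation inside $K$. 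The hard part will be constructing $(\ell, \epsilon)$-spanning sets for $\varphi_H^1$ on $M$ from $(\ell, \epsilon)$-spanning sets for $\varphi_\alpha^{-s_H}$ on $\partial K$ so that the transverse radial spreading and the transition regions do not contribute any multiplier beyond $s_H$; once this is in place, letting $\epsilon \to 0$ and then taking the infimum over admissible $H$ with $c_H \geq 1+3r_1$ produces the desired constant $C(M,K)$.
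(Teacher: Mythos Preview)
Your overall architecture matches the paper's: Theorem \ref{rene} reduces to bounding the negative Hamiltonian barcode entropy, then tomograph $+$ Crofton $+$ Yomdin converts bar counts into volume growth and hence into topological entropy, and finally one compares $h_{\textrm{top}}(\varphi_H^1)$ with $s_H\, h_{\textrm{top}}(\varphi_\alpha^1)$. Two points are worth noting.

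First, the paper makes a simplification you do not: because the bars in question are detected by \emph{lower} orbits, the paper restricts the graph intersection to $K_{1+r_1}\times K_{1+r_1}$ before applying Crofton and Yomdin, landing directly on $h_{\textrm{top}}(\varphi_H|_{K_{1+r_1}})$. The final comparison with the Reeb entropy is then a one-line citation of Lemma~5.12 of \cite{fls}, which gives $h_{\textrm{top}}(\varphi_H|_{K_{1+r_1}}) = s_H\, h_{\textrm{top}}(\varphi_\alpha)$. Your ``most delicate step'' (building $(\ell,\epsilon)$-spanning sets for $\varphi_H^1$ out of those for $\varphi_\alpha^{-s_H}$) is exactly the content of that lemma; your route works but reproves an existing result.

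Second, your inequality $b_\epsilon(\textrm{tru}(HF^{\textrm{neg}}(\sigma H),\sigma i_H)) \leq 2\,|\Delta_s \cap \Gamma_{\varphi_H^\sigma}|$ as written has a gap: the generators of $CF(\sigma H)$ correspond to $\Delta\cap\Gamma_{\varphi_H^\sigma}$, not to $\Delta_s\cap\Gamma_{\varphi_H^\sigma}$, so ``each bar uses at most two generators'' does not by itself justify replacing $\Delta$ by a generic tomograph leaf. The paper (following \cite{cgg}) closes this by taking the tomograph Hofer-close to $\Delta$ and invoking barcode stability, which costs a factor of two in the threshold: $b_{2\epsilon}(\cdots) \leq |\Delta_s\cap\Gamma_\sigma|$. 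You should insert this stability step explicitly.
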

\begin{proof}
We use the same notation as in the proof of Theorem \ref{rene}. Let $\epsilon>0$. For sufficiently large $\sigma>0$, as shown in the proof of Theorem \ref{rene}, we can choose a $K$-semi-admissible Hamiltonian function $H$ such that the isomorphism 
\begin{align*}
    \bigoplus_{k \in A_\sigma} SH^{>-\tau,k}_M(K) \cong \bigoplus_{k \in A_\sigma} HF^{(a_{\sigma H}(\tau),0],k}(\sigma H)
\end{align*}
holds for $0\leq \tau\leq\sigma \leq \sigma s_H$. Consequently, we obtain the inequality 
\begin{align*}
    b_{\epsilon} \left( \textrm{tru}\left( \bigoplus_{k \in A_\sigma} SH^{k}_M(K),\sigma \right) \right) \leq b_{\epsilon} \left( \textrm{tru}\left(\bigoplus_{k \in A_\sigma}HF^{\textrm{neg},k}(\sigma H),-a_{\sigma H}(\sigma)\right) \right).
\end{align*}

\vspace{0.2cm}

Let $\Delta$ be the diagonal of $(M', \omega') = ( M \times M,(-\omega)\oplus \omega)$ and $\Gamma_{\sigma} = \Gamma_{\varphi_H^{\sigma}}$ be the graph of the Hamiltonian diffeomorphism $\varphi_{\sigma H} = \varphi_{\sigma H}^1 = \varphi_H^{\sigma}$ of $\sigma H$, namely,
    \begin{align*}
        \Gamma_{\sigma} = \left\{ (x, \varphi_{\sigma H}(x)) \in M \times M \bigmid x\in M \right\}.
        \end{align*}
        Then every element of $CF(\sigma H)$ arises from an intersection point of the intersection $\Delta \cap \Gamma_{\sigma}$ of the diagonal $\Delta$ and the graph $\Gamma_\sigma$. Recall from the proof of Theorem \ref{rene}, in order to estimate $$b_{\epsilon} \left( \textrm{tru}\left(\bigoplus_{k \in A_\sigma}HF^{\textrm{neg},k}(\sigma H),-a_{\sigma H}(\sigma)\right) \right),$$
        it suffices to consider only the lower orbits of $\sigma H$. This implies that
\begin{align*}
    b_{\epsilon} \left( \textrm{tru}\left(\bigoplus_{k \in A_\sigma}HF^{\textrm{neg},k}(\sigma H),-a_{\sigma H}(\sigma)\right) \right) \leq \left| (\Delta \cap \Gamma_\sigma )\cap (K_{1+r_1} \times K_{1+r_1})  \right|.
\end{align*}

\vspace{0.2cm}

Consider the Lagrangian tomograph $\Delta_s$ of $\Delta$ as above. We may assume that the Hofer distance between $\Delta$ and $\Delta_s$ is small enough so that  
    \begin{align*}
       b_{2\epsilon} \left( \textrm{tru}\left(\bigoplus_{k \in A_\sigma}HF^{\textrm{neg},k}(\sigma H),-a_{\sigma H}(\sigma)\right) \right) \leq | (\Delta_s \cap \Gamma_{\sigma}) \cap (K_{1+r_1} \times K_{1+r_1})|.
    \end{align*}
Let $N_{\sigma}(s) = | (\Delta_s \cap \Gamma_{\sigma}) \cap (K_{1+r_1} \times K_{1+r_1})|$. Then by Crofton's inequality (Theorem \ref{crof}), we have
    \begin{align*}
        \int_B N_{\sigma}(s) ds \leq D \,\textrm{Vol}(\Gamma_{\sigma} \cap(K_{1+r_1} \times K_{1+r_1}))
    \end{align*}
    for some constant $D>0$, not depending on $\sigma$. Then there exists a constant $D'$, also independent of $\sigma$, such that
    \begin{align}\label{a1}
    b_{2\epsilon} \left( \textrm{tru}\left(\bigoplus_{k \in A_\sigma}HF^{\textrm{neg},k}(\sigma H),-a_{\sigma H}(\sigma)\right) \right) \leq D' \,\textrm{Vol}(\Gamma_{\sigma} \cap(K_{1+r_1} \times K_{1+r_1}))
    \end{align}
   By Yomdin's theorem (Theorem \ref{y}), 
   \begin{align}\label{yy}
       \limsup_{\sigma \to \infty} \frac{1}{\sigma} \log \textrm{Vol}\left( \Gamma_\sigma \cap(K_{1+r_1} \times K_{1+r_1}) \right)
      \leq h_{\textrm{vol}}(\varphi_H|_{K_{1+r_1} })
      \leq h_{\textrm{top}}( \varphi_{H}|_{K_{1+r_1} }).
   \end{align}

\vspace{0.2cm}

Combining the results from the discussion above, we obtain

    \begin{align*}
    \hbar_{2\epsilon}(SH_M(K))&= 
    \limsup_{\sigma \to \infty} \frac{1}{\sigma} \log^+ b_{2\epsilon} \left( \textrm{tru}\left(  SH_M(K),\sigma \right) \right) \\
    &= \limsup_{\sigma \to \infty} \frac{1}{\sigma} \log^+ b_{2\epsilon} \left( \textrm{tru}\left( \bigoplus_{k \in A_\sigma} SH^{k}_M(K),\sigma \right) \right) \\
    &= \limsup_{\sigma \to \infty} \frac{1}{\sigma} \log^+ b_{2\epsilon} \left( \textrm{tru}\left(\bigoplus_{k \in A_\sigma}HF^{\textrm{neg},k}(\sigma_\ell H),-a_{\sigma H}(\sigma)\right) \right) &&\textrm{by \eqref{oy}}\\
    &\leq \limsup_{\sigma \to \infty} \frac{1}{\sigma} \log \textrm{Vol}\left( \Gamma_\ell \cap(K_{1+r_1} \times K_{1+r_1}) \right)&&\textrm{by \eqref{a1}}\\
     &= h_{\textrm{top}}(\varphi_{H}|_{K_{1+r_1}}) &&\textrm{by \eqref{yy}}\\&= s_{H} h_{\textrm{top}}(\varphi_\alpha) 
    \end{align*}
    where the last identity follows from Lemma 5.12 of \cite{fls}. Taking $\epsilon \to 0$, we have
\begin{align}\label{shin}
    \hbar(SH_M(K)) \leq s_{H} h_{\textrm{top}}(\varphi_\alpha).
\end{align}

\vspace{0.2cm}

Since the inequality holds for any $K$-semi-admissible Hamiltonian function $H$ satisfying \eqref{shin}, we may define
\begin{align*}
    C'= \inf \left\{s_H \bigmid c_H \geq 1+3r_1  \right\}.
\end{align*}
As shown above $s_H >1$ whenever $H$ satisfies \eqref{ch} and therefore $C' \geq1$. In general, this number $C'$ depends on the choice of $r_1$; we therefore denote it by $C'(r_1)$. To further constrain this number, consider
\begin{align*}
    r_{\textrm{max}} = \sup \left\{r>0\bigmid K \cup\left(\partial K \times [1,1+3r]\right)\,\,\textrm{is symplectically embedded into}\,\,M \right\}.
    \end{align*}
    This number $r_{\textrm{max}}$ is finite because $M$ is assumed to be compact. We then define the constant 
    \begin{align*}
        C = \inf\left\{ C'(r) \bigmid 0<r\leq r_{\textrm{max}} \right\}.
    \end{align*}
    By construction, the constant $C$ does depend on the pair  $(M,K) $. With this choice, the inequality \eqref{shin} strengthens to
    \begin{align*}
        \hbar(SH_M(K)) \leq C h_{\textrm{top}}(\varphi_\alpha).
    \end{align*}
\end{proof}

\Addresses
\end{document}